\def\R{{\mathbb {R}}}
\def\N{{\mathbb {N}}}
\def\vp{\varphi}
\def\ve{\varepsilon}
\def\cd{\rightharpoonup}
\def\supp{\operatorname {\text{\rm supp}}}
\def\dist{\operatorname {\text{\rm dist}}}
\newtheorem{teo}{Theorem}[section]
\newtheorem{lema}[teo]{Lemma}
\newtheorem{prop}[teo]{Proposition}
\theoremstyle{remark}
\theoremstyle{definition}
\numberwithin{equation}{section}
\begin{document}
	
\title[Extremals in HLS inequalities for  stable processes]{Extremals in Hardy-Littlewood-Sobolev inequalities for  stable processes}
\author[A. de Pablo, F. Quir\'os and A. Ritorto]{Arturo de Pablo, Fernando Quir\'os and Antonella Ritorto}

\address[A. de Pablo]{Departamento de Matem\'{a}ticas, Universidad Carlos III de Madrid, 28911 Legan\'{e}s, Spain.}
\email{arturop@math.uc3m.es}

\address[F. Quir\'os]{Departamento de Matem\'{a}ticas, Universidad 	Aut\'{o}noma de Madrid, 28049 Madrid, Spain.}
\email{fernando.quiros@uam.es}

\address[A. Ritorto]{Mathematisch-Geographische Fakult\"at, Katholische Universit\"at Eichst\"att-Ingolstadt, 85071 Eichst\"att, Germany}
\email{antonella.ritorto@ku.de}

\subjclass[2010]{
35S05, 
35J61, 
45G05. 
}
\keywords{Concentration-compactness Principle, $\alpha$--stable processes, elliptic problems with critical nonlinearities}

\begin{abstract}
We prove the existence of an extremal function in the Hardy-Littlewood-Sobolev inequality for the energy associated to an stable operator. To this aim we obtain a concentration-compactness principle for stable processes in $\R^N$.
\end{abstract}

\date{\today}

\maketitle

\section{Introduction}
After the seminal work of Brezis and Nirenberg~\cite{Brezis-Nirenberg}, the relation between the existence of positive solutions of elliptic equations with critical nonlinearities perturbed by lower order terms and the existence of an extremal function for the Sobolev type inequality associated with the elliptic operator is now well understood. The aim of this paper is to prove the existence of such an extremal function for operators generated by symmetric stable L\'evy processes, as a first step   to study the corresponding critical problems.

L\'evy processes appear in several applied fields, as for example biology, mathematical finance and physics, to take into account the possible appearance of jumps, and have been analyzed thoroughly in the last years from the point of view both of analysis and probability; see for instance the survey \cite{RosOton2016} and the references therein.  L\'evy operators have the form
\begin{equation}\label{operator-L}
\displaystyle\mathcal{L}u(x)= \textrm{P.V.}\int_{\R^{N}}(u(x)-u(x-y))\,\nu(x,dy),
\end{equation}
where P.V. stands for principal value, and the nonnegative measure $\nu$ satisfies
\begin{equation}\label{kernel-L}
\displaystyle\int_{\mathbb{R}^{N}} \min\{1,\,|y|^2\}\, \nu(x,dy)\le C<\infty \quad \text{for all } x\in\mathbb{R}^N.
\end{equation}
They play the same role for L\'evy processes as the Laplacian does in the description of Brownian motion. If $\mathcal{L}$ is generated by a process which is moreover symmetric and stable, that is, a process $X=\{X_t\}_{t\ge0}$ satisfying
$$
\gamma X_t= X_{\gamma^\alpha t},\quad \gamma>0,\; t\ge0,
$$
then
\begin{equation}
  \label{a-stable-measure}\tag{S}
  \nu(x,dy)=\nu(dy)=\dfrac{dr}{r^{1+\alpha}}\,d\mu(\theta),\qquad r=|y|,\;\theta=\frac{y}{|y|},
\end{equation}
for some measure $d\mu(\theta)$ on the sphere known as the \emph{spectral measure}. This measure  is required to be finite and to  satisfy a non-degeneracy assumption; see Section~\ref{sect:preliminaries} for the details. The singularity at the origin of the kernel $\nu$, which  defines the differential character of the operator, makes it of order $\alpha$, like $\alpha$ derivatives. Observe that condition~\eqref{operator-L} implies $0<\alpha<2$. We will restrict ourselves to the case of several spatial dimensions, $N\ge2$, since the only stable operators in one dimension are $c(-\partial^2_{xx})^{\frac\alpha2}$.

When $d\mu(\theta)=d\theta$ we recover (a multiple of) the  fractional Laplacian $(-\Delta)^{\frac\alpha2}$, the best known case of an $\alpha$-stable operator (stable operator of order $\alpha$). The general case is more involved since the operator may be:
\begin{itemize}
\item \emph{anisotropic}, if  $\mu$ is not rotationally invariant in $y$;
\item \emph{rough}, meaning that $\nu$ need not be smooth outside the origin $y=0$.
\end{itemize}
In addition, the spectral measure may vanish in some directions and be singular for others; this is the case, for instance, when $\mathcal{L}$ is the sum of fractional Laplacians of order $\alpha$ of smaller dimensions, a well-known example of an anisotropic $\alpha$-stable operator. On the positive side we have the homogeneity of operators in this class, which plays to our advantage.

Associated with the $\alpha$-stable operator $\mathcal{L}$ we have the semilinear elliptic problem
\begin{equation}\label{intro-problem}
\mathcal{L}u = f(u)\quad \text{in } \Omega,\qquad
u=0 \quad\text{in } \Omega^c=\R^N \setminus \Omega,
\end{equation}
where $\Omega\subset\mathbb{R}^N$ is a bounded set.  A case of special interest arises when the nonlinearity $f(u)$ is critical in some sense related to the operator $\mathcal{L}$; see below.

In principle, solutions to~\eqref{intro-problem} need not have the required regularity for $\mathcal{L}u$ to be well defined. In particular the pointwise expression \eqref{operator-L} makes sense only for functions that do not grow too much at infinity and are H\"older continuous $C^{\alpha+\varepsilon}$  for some $\varepsilon>0$.  We then have to work in general with weak solutions
 by using the bilinear form
\begin{equation}
  \label{bilinear}
  \mathcal{E}(u,v)=\frac12\int_{\mathbb{R}^N}\int_{\mathbb{R}^N}\left(u(x)-u(x-y)\right)
  \left(v(x)-v(x-y)\right)\nu(dy)\,dx,
\end{equation}
defined for functions in the energy space
\begin{equation}
  X(\Omega)=\left\{ u\colon \R^N \to \R \text{ measurable} \colon u\equiv0 \text{ on } \Omega^c \, \text{ and } \mathcal{E}(u,u)<\infty \right\},
\end{equation}
endowed with the norm $\| u\|_{X(\Omega)}=\mathcal{E}(u,u)^{\frac{1}{2}}$.  Under our assumptions on the spectral measure, this norm is equivalent to the standard fractional Sobolev norm in $\dot H^{\frac\alpha2}(\R^N)$; see~\eqref{equivalencia-normas}. Note that $X(\Omega)$ takes into account the (nonlocal) boundary condition in problem~\eqref{intro-problem}.

Assuming the symmetry property
\begin{equation}
\label{symmetry}
 d\mu(\theta)=d\mu(-\theta), \quad\theta\in\mathbb{S}^{N-1},
\end{equation}
then
\begin{equation}
  \label{weak}
  \mathcal{E}(u,v)=\int_{\mathbb{R}^N}\mathcal{L}u\,v \quad\text{for every }u,\,v\in C_0^\infty(\mathbb{R}^N).
\end{equation}
We then say that $u\in X(\Omega)$ is a (weak) solution to problem~\eqref{intro-problem}  if
$$
\mathcal{E}(u,\vp)=\int_\Omega f(u)\vp \quad\text{for every }\vp\in X(\Omega).
$$

Now, due to  the compact embedding $X(\Omega)\hookrightarrow L^{r}(\Omega)$ when $r<2^*_\alpha:=\frac{2N}{N-\alpha}$,  existence of a solution to problem~\eqref{intro-problem} follows by means of standard variational techniques provided $f$ is subcritical in the following sense,
\begin{equation}
  \label{f-subcrit}
  |f(u)|\le c(1+|u|)^\gamma,\qquad  \gamma<p=2^*_\alpha-1=\frac{N+\alpha}{N-\alpha}\,.
\end{equation}
In fact the existence of a solution  is equivalent to the existence of a solution for a minimization
problem, where compactness has a preeminent role.
When $N\le \alpha$ there is no critical exponent and the embedding is always compact. Note that in our context we have $N\ge2>\alpha$.

On the other hand,  it was proved in \cite{RosOton-Serra} by means of a Pohozaev identity that problem~\eqref{intro-problem} with $\mathcal{L}=(-\Delta)^{\frac\alpha2}$ has no positive solution if $f(u)=u^\gamma$, $\gamma\ge p$, and $\Omega$ is star-shaped. Later a similar result was shown to hold also in the $\alpha$-stable case; see \cite{RosOton-Serra-Valdinoci}.

A natural question is whether a perturbation of the critical reaction may yield existence of a solution, for instance for the problem
\begin{equation}\label{problem-cc0}
\mathcal{L} u = u^{p}+\lambda u^q \quad\text{in } \Omega,\qquad
u=0 \quad\text{in } \Omega^c,\qquad u>0 \quad \text{in }\Omega,
\end{equation}
where $\lambda>0$ and $q<p$. This question was addressed  for the  local problem, where $\mathcal{L}=-\Delta$, and for all $0<q<p$ in the classical papers \cite{Ambrosetti-Brezis-Cerami-94, Brezis-Nirenberg}, and extended later to the case $\mathcal{L}=(-\Delta)^{\frac\alpha2}$ in \cite{Barrios-Colorado-Servadei-Soria, FB-Saintier-Silva, Servadei-Valdinoci}. It has even been studied for the related case of the so-called \emph{spectral fractional Laplacian}, defined by means of the eigenvalues of the Laplacian in $\Omega$, in~\cite{Brandle-Colorado-dePablo-Sanchez, Tan2011}; see also \cite{Colorado-dePablo-Sanchez}.

For the critical exponent $2^*_\alpha$ we still have the inclusion $X(\Omega)\hookrightarrow L^{2^*_\alpha}(\Omega)$, which follows from the Hardy-Littlewood-Sobolev (HLS) inequality
\begin{equation}
  \label{HLS}
\|u\|_{\dot H^{\frac\alpha2}}\ge S\|u\|_{2^*_\alpha}\,,
\end{equation}
but it is not compact. As shown in~\cite{Brezis-Nirenberg}, lack of compactness can be circumvented in perturbed critical cases like \eqref{problem-cc0} if there is an extremal for the corresponding (HLS) inequality. However, except in particular situations for which they are explicit, proving the existence of an extremal already requires having some compactness, which is not immediate in the whole space where concentration effects may occur. In the outstanding series of works \cite{Lions-locally-PartI}--\cite{Lions-limit-case-PartII} P.\,L. Lions developed  a general method, based on what he named as concentration-compactness principle (CCP), to deal with such problems. His papers include the case of the fractional Laplacian as well as the $p$-fractional Laplacian.

In our case the operator is anisotropic, so extremals for the (HLS) inequality
\begin{equation}\label{ineq}
\mathcal{E}(u,u) \ge S \|u\|_{2_{\alpha}^*}^2 \quad \text{ for every } u \in \dot{H}^{\frac{\alpha}{2}}(\R^N),
\end{equation}
$S$ being the best constant (see \eqref{S}), are not expected to be radial, and there is no hope to get them explicitly. Thus, in order to prove their existence we will follow the approach of P.\,L. Lions, establishing the CCP.

The purpose of this work is to prove a CCP for $\alpha$-stable operators, Theorem~\ref{concentracion-compacidad-RN}, and then to apply it to show the existence of an  extremal for inequality~\eqref{ineq}, Theorem \ref{existencia-extremal}. Our results can be seen as a first step to study the perturbed problem~\eqref{problem-cc0}, which will be the subject of a future work.

We remark that our proof of the CCP can be easily  adapted to deal with operators having kernels which are comparable to the one of the fractional Laplacian in the following sense:
\begin{equation}
\label{eq:comparable.kernel}
  \nu(x,dy)=\frac{a(x,y)\,dy}{|y|^{N+\alpha}},\qquad  0<c_1\le a(x,y)\le c_2<\infty.
\end{equation}
However, the existence of an extremal for \eqref{ineq} in the case of stable operators uses in an essential way the homogeneity of the operator, and hence cannot be adapted to general kernels of the form~\eqref{eq:comparable.kernel}.  On  the other hand, in the frame of $\alpha$-stable processes the main issue to deal with relies on its anisotropic feature. To overcome this, we provide a geometric result that gives an accurate decay of certain integrals involving the measure $\mu$.

\noindent \textit{Notation.} Throughout the paper, the letters $c,\,C$ denote generic positive constants; $B_R(a)$ denotes the ball $\{x\in\mathbb{R}^N:|x-a|<R\}$, and $B_R=B_R(0)$; the $L^r$-norms are written as $\|\cdot\|_r$, without specifying the domain when no confusion arises.

\section{Preliminaries}
\label{sect:preliminaries}

We first show some properties of the bilinear form~\eqref{bilinear}. Using Fourier Transform we see that
$$
\mathcal{E}(u,u)=\int_{\mathbb{R}^N}m(\xi)|\hat u(\xi)|^2\,d\xi,
$$
where
\begin{equation}
\label{multiplier}
\displaystyle m(\xi)=|\xi|^\alpha g(\xi/|\xi|), \qquad g(\zeta)=c_{N,\alpha}\int_{\mathbb {S}^{N-1}}|\zeta\cdot \theta|^\alpha\,d\mu(\theta),
\end{equation}
for some constant $c_{N,\alpha}>0$.
Therefore $m(\xi)$ is homogeneous.
If $\mu$ is finite, $\mu(\mathbb{S}^{N-1})<\infty$, and satisfies
\begin{equation}\label{eq.ellipcity}
 \inf_{\zeta\in \mathbb{S}^{N-1}} \int_{\mathbb{S}^{N-1}}|\zeta\cdot\theta|^{\alpha}\,d\mu(\theta)\ge c_0>0,
\end{equation}
we obtain also $m(\xi)\sim|\xi|^\alpha$, from which we deduce the required equivalence
\begin{equation}\label{equivalencia-normas}
\mathcal{E}(u,u)\sim\int_{\mathbb{R}^N}|\xi|^\alpha|\hat u(\xi)|^2\,d\xi=\|u\|_{\dot H^{\frac\alpha2}}^2.
\end{equation}
By $f\sim g$ we understand that there exist two positive constants $c_1,c_2$ such that $c_1\le f/g\le c_2$.

When the spectral measure is uniform,  $d\mu(\theta)=\gamma\,d\theta$ with $\gamma>0$ a constant, we have $m(\xi)=c_{N,\alpha} \gamma|\xi|^{\alpha}$, and the operator reduces to a multiple of the fractional Laplacian. But the class of stable operators is much more general, and includes, for example, the relevant case  in which $\mathcal{L}$ is the sum of fractional Laplacians of smaller dimensions. A particular instance of such case is
\begin{equation*}
\label{sumalaplacianos}
\mathcal{L}=\sum_{j=1}^Nb_j(-\partial^2_{x_jx_j})^{\frac\alpha2},
\end{equation*}
for which the spectral measure  is $d\mu(\theta)=\frac12c_{1,\alpha}\sum_{j=1}^Nb_j(\delta(\theta-e_j)+\delta(\theta+e_j))$, $\{e_j\}$ being the canonical basis in $\R^{N}$. This is in some sense the extreme case of spectral measure, a sum of Dirac deltas. Here the multiplier is
$m(\xi)=\sum_{j=1}^Nb_j|\xi_j|^{\alpha}$.

We now define, for a measure $\nu$,  the $\nu$--gradient
\begin{equation}
  \label{nu-gradient}
  D^\nu u(x)=\left(\frac12\int_{\mathbb{R}^N}\left|u(x)-u(x-y)\right|^2\nu(dy)\right)^{1/2}.
\end{equation}
If the measure corresponds to an $\alpha$-stable process,  this is written in terms of the spectral measure as
\begin{align*}
D^\nu u(x)= \frac{1}{2}\int_{0}^{\infty} \int_{\mathbb{S}^{N-1}} |u(x)-u(x-r\theta)|^2 \, d\mu(\theta) \frac{dr}{r^{1+\alpha}}.
\end{align*}
Observe that, by definition,
$$
\mathcal{E}(u,u)= \|D^{\nu} u\|_{2}^2.
$$
The following scaling properties are straightforward.
\begin{lema}\label{scaling} Let $u_{\tau}(x)=u( x/\tau)$, $\tau>0$. We have
$$
D^{\nu}u_\tau(x)= \tau^{-\frac\alpha2}D^{\nu}u( x/\tau),
$$
and as a consequence
$$
\|D^{\nu}u_\tau\|_2= \tau^{\frac{N-\alpha}2}\|D^{\nu}u\|_2,\qquad
\|D^{\nu}u_\tau\|_{\frac{2N}\alpha}=\|D^{\nu}u\|_{\frac{2N}\alpha}.
$$
\end{lema}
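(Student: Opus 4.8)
The plan is to reduce both assertions to a single change of variables in the integral defining the $\nu$-gradient \eqref{nu-gradient}, exploiting the precise homogeneity of the $\alpha$-stable kernel \eqref{a-stable-measure}; note that the symmetry hypothesis \eqref{symmetry} plays no role here.

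First I would insert $u_\tau$ into \eqref{nu-gradient} and write the measure in polar form, $\nu(dy)=\frac{dr}{r^{1+\alpha}}\,d\mu(\theta)$ with $r=|y|$, $\theta=y/|y|$, so that, since $u_\tau(x-r\theta)=u(x/\tau-(r/\tau)\theta)$,
$$
D^\nu u_\tau(x)^2=\frac12\int_0^\infty\int_{\mathbb{S}^{N-1}}
\bigl|u(x/\tau)-u\bigl(x/\tau-(r/\tau)\theta\bigr)\bigr|^2\,d\mu(\theta)\,\frac{dr}{r^{1+\alpha}}.
$$
The key step is the substitution $r=\tau s$ in the radial variable: since $\frac{dr}{r^{1+\alpha}}=\tau^{-\alpha}\,\frac{ds}{s^{1+\alpha}}$, the right-hand side becomes $\tau^{-\alpha}\,D^\nu u(x/\tau)^2$, and taking square roots yields the pointwise identity $D^\nu u_\tau(x)=\tau^{-\alpha/2}D^\nu u(x/\tau)$. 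The exponent $1+\alpha$ of the kernel is exactly what turns the radial Jacobian into a clean power of $\tau$; this is the only place where the stable structure is used.

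The two norm identities then follow by a further change of variables $z=x/\tau$ in the outer integral, starting from the pointwise identity raised to a power $q$, namely $D^\nu u_\tau(x)^q=\tau^{-\alpha q/2}D^\nu u(x/\tau)^q$. For $q=2$ this gives $\|D^\nu u_\tau\|_2^2=\tau^{-\alpha}\,\tau^{N}\|D^\nu u\|_2^2=\tau^{N-\alpha}\|D^\nu u\|_2^2$. For $q=2N/\alpha$ the outer dilation contributes $\tau^{N}$ while the pointwise factor contributes $\tau^{-\alpha q/2}=\tau^{-N}$, so the two cancel and $\|D^\nu u_\tau\|_{2N/\alpha}=\|D^\nu u\|_{2N/\alpha}$; indeed $2N/\alpha$ is precisely the exponent for which $D^\nu$ is scale invariant in $L^q$.

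There is essentially no obstacle: the statement is a direct consequence of the homogeneity of the kernel together with Tonelli's theorem, and is valid for every measurable $u$ (as an identity in $[0,\infty]$), in particular an equality of finite quantities whenever $\mathcal{E}(u,u)<\infty$. The only care needed is bookkeeping of the three sources of powers of $\tau$ — the radial Jacobian, the square root, and the outer dilation — and checking that they combine as stated.
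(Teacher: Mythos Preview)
Your argument is correct and is precisely the straightforward change-of-variables computation the paper has in mind; the paper does not spell out a proof at all, simply declaring the scaling properties ``straightforward''. There is nothing to add.
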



\

Next lemma, whose proof is also standard, shows to be useful in later calculations.
\begin{lema}\label{decay} If $u\in C^{2}_c(\R^N)$ then $D^{\nu}u \in L^\infty(\mathbb{R}^N)\cap L^2(\mathbb{R}^N)$. Moreover
$$
\|D^{\nu}u\|_\infty\le C(\|u\|_{\infty}^2+\|\nabla u\|_{\infty}^2)^{1/2},\qquad\|D^{\nu}u\|_2\le C (\|u\|_{2}^2+\|\nabla u\|_{2}^2)^{1/2},
$$
where $C$ depends on the support of $u$.
\end{lema}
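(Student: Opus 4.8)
The plan is to use the two features of the L\'evy measure encoded in~\eqref{kernel-L}: an integrable singularity of order $\alpha<2$ at the origin together with finite mass on $\{|y|>1\}$ (in the $\alpha$-stable case~\eqref{a-stable-measure} both amount to $\mu(\mathbb{S}^{N-1})<\infty$ and $0<\alpha<2$). One increment estimate, inserted into the definition~\eqref{nu-gradient} of $D^\nu u$, will give both bounds.

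\emph{Step 1: increment estimates.} For $u\in C^2_c(\R^N)$ the mean value theorem yields $|u(x)-u(x-y)|\le\|\nabla u\|_\infty|y|$ for every $x,y$, while trivially $|u(x)-u(x-y)|\le2\|u\|_\infty$; combining the two cases $|y|\le1$ and $|y|>1$,
\[
|u(x)-u(x-y)|^2\le C\,\min\{1,|y|^2\}\,\bigl(\|u\|_\infty^2+\|\nabla u\|_\infty^2\bigr)\qquad\text{for all }x,y\in\R^N.
\]
In the same way, writing $u(x-y)-u(x)=-\int_0^1 y\cdot\nabla u(x-ty)\,dt$, applying Cauchy--Schwarz and using the translation invariance of Lebesgue measure, one gets $\|u(\cdot-y)-u\|_2\le|y|\,\|\nabla u\|_2$; together with $\|u(\cdot-y)-u\|_2\le2\|u\|_2$ this gives
\[
\|u(\cdot-y)-u\|_2^2\le C\,\min\{1,|y|^2\}\,\bigl(\|u\|_2^2+\|\nabla u\|_2^2\bigr).
\]
(Only $C^1_c$ regularity of $u$ is actually used here.)

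\emph{Step 2: the two bounds.} For the $L^\infty$ estimate, insert the first inequality of Step~1 into~\eqref{nu-gradient} and use~\eqref{kernel-L}:
\[
D^\nu u(x)^2=\tfrac12\int_{\R^N}|u(x)-u(x-y)|^2\,\nu(dy)\le C\bigl(\|u\|_\infty^2+\|\nabla u\|_\infty^2\bigr)\int_{\R^N}\min\{1,|y|^2\}\,\nu(dy).
\]
For the $L^2$ estimate, Tonelli's theorem (the integrand is nonnegative) lets me interchange the order of integration in $\E(u,u)=\|D^\nu u\|_2^2$, and then the second inequality of Step~1 and~\eqref{kernel-L} give
\[
\|D^\nu u\|_2^2=\tfrac12\int_{\R^N}\|u(\cdot-y)-u\|_2^2\,\nu(dy)\le C\bigl(\|u\|_2^2+\|\nabla u\|_2^2\bigr)\int_{\R^N}\min\{1,|y|^2\}\,\nu(dy),
\]
which in particular shows $\E(u,u)<\infty$, i.e. $D^\nu u\in L^2(\R^N)$. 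Alternatively the $L^2$ bound follows at once from the Fourier representation $\|D^\nu u\|_2^2=\int_{\R^N}m(\xi)|\hat u(\xi)|^2\,d\xi$ in~\eqref{multiplier} and the pointwise bound $m(\xi)\le C|\xi|^\alpha\le C(1+|\xi|^2)$, valid because $\mu$ is finite and $\alpha<2$.

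I do not expect any genuine obstacle; as the authors note, this is standard. The one point worth keeping in mind is that one should not try to obtain the $L^2$ bound by integrating the pointwise bound over a neighbourhood of $\supp u$: for $x$ with $d:=\dist(x,\supp u)$ large one only gets $D^\nu u(x)^2\le C\|u\|_\infty^2\,\nu(\{|y|>d\})$, which for an $\alpha$-stable measure is of order $d^{-\alpha}$ and hence not square-integrable at infinity when $\alpha<N$; the translation (Poincar\'e-type) estimate of Step~1, or the Fourier argument, is what bypasses this. The constants can be taken to depend only on $\int_{\R^N}\min\{1,|y|^2\}\,\nu(dy)$; the dependence on $\supp u$ recorded in the statement is harmless, entering only if one wishes to pass between $L^\infty$ and $L^2$ norms of $u$ and $\nabla u$ via $\|v\|_2\le|\supp u|^{1/2}\|v\|_\infty$.
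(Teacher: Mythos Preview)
Your argument is correct and is exactly the decomposition the paper has in mind: split the $\nu$-integral into $\{|y|\le1\}$ and $\{|y|>1\}$, control the increment by $\|\nabla u\|_\infty|y|$ (respectively $|y|\,\|\nabla u\|_2$) on the first piece and by $2\|u\|_\infty$ (respectively $2\|u\|_2$) on the second, then invoke~\eqref{kernel-L}. Your added remarks---that $C^1_c$ suffices, the Fourier alternative for the $L^2$ bound, and the warning that the pointwise decay $D^\nu u(x)^2\lesssim\dist(x,\supp u)^{-\alpha}$ is not square-integrable---are all to the point and go beyond what the paper records.
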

\begin{proof} Just decompose the integral defining $D^\nu u$ into two parts, for small $|y|$ and large $|y|$.
\end{proof}

In particular this implies  $C^{2}_c(\Omega)\subset X(\Omega)$.

As to the action of the bilinear form on a product we prove the following result.
\begin{lema}\label{lem.product}
Let $u,\,v\in \dot H^{\frac\alpha2}(\mathbb{R}^N)$. Then,
\begin{equation}
  \label{eq.product}
  \mathcal{E}(uv,uv)^{\frac{1}{2}}\le \left(\int_{\mathbb{R}^N}|u|^2|D^\nu v|^2\,dx\right)^{\frac{1}{2}} + \left(\int_{\mathbb{R}^N}|D^\nu u|^2|v|^2\,dx \right)^{\frac{1}{2}}.
\end{equation}
\end{lema}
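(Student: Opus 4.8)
The plan is to work directly from the pointwise algebraic identity for products. For the product $uv$ we have the Leibniz-type identity
\begin{equation*}
(uv)(x) - (uv)(x-y) = u(x)\bigl(v(x)-v(x-y)\bigr) + v(x-y)\bigl(u(x)-u(x-y)\bigr).
\end{equation*}
Squaring and using the triangle inequality in the Hilbert space $L^2(\nu(dy))$ (i.e. taking the $L^2(\nu)$-norm in the $y$ variable of the left side and bounding it by the sum of the $L^2(\nu)$-norms of the two terms on the right), we obtain for each fixed $x$
\begin{equation*}
\left(\tfrac12\int_{\R^N}|(uv)(x)-(uv)(x-y)|^2\,\nu(dy)\right)^{1/2}
\le |u(x)|\,D^\nu v(x) + \left(\tfrac12\int_{\R^N}|v(x-y)|^2|u(x)-u(x-y)|^2\,\nu(dy)\right)^{1/2}.
\end{equation*}
The left-hand side is exactly $D^\nu(uv)(x)$. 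Then I would square both sides, integrate in $x$, and again apply the triangle inequality in $L^2(\R^N,dx)$ to the two terms on the right, using $\mathcal{E}(uv,uv)=\|D^\nu(uv)\|_2^2$.

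The only nontrivial point is the second term: after integrating in $x$ we must bound
\begin{equation*}
\frac12\int_{\R^N}\int_{\R^N}|v(x-y)|^2\,|u(x)-u(x-y)|^2\,\nu(dy)\,dx
\end{equation*}
by $\int_{\R^N}|D^\nu u|^2|v|^2\,dx$. Here I would like to say that translating $x\mapsto x$ inside while the "weight" sits at $x-y$ is not symmetric, so the naive move is to change variables. Swapping the roles via the substitution $z=x-y$ (for fixed $y$, $dx=dz$) turns the inner expression into $|v(z)|^2|u(z+y)-u(z)|^2$, and then using the symmetry $\nu(dy)=\nu(-dy)$ (which holds under \eqref{symmetry}, recalling $\nu$ is even) we recognize $\tfrac12\int_{\R^N}|u(z)-u(z-y)|^2\,\nu(dy) = |D^\nu u(z)|^2$. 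Hence this term equals exactly $\int_{\R^N}|v|^2|D^\nu u|^2\,dx$, as desired. Combining the two estimates gives \eqref{eq.product}.

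I expect the main obstacle — really the only thing to be careful about — to be the symmetrization step: one must make sure the change of variables is applied with $y$ frozen and that the evenness of the spectral measure is genuinely available, so that the asymmetric weight $|v(x-y)|^2$ can be relocated onto the diagonal. (If one only wanted the inequality up to a constant, even this could be avoided, but to get the clean constant-free bound \eqref{eq.product} the symmetry of $\nu$ is used in an essential way.) A secondary, entirely routine point is that all integrals are finite: since $u,v\in\dot H^{\alpha/2}$ one first proves the estimate for, say, $u,v\in C^2_c(\R^N)$ using Lemma \ref{decay}, and then passes to the general case by density in the $\dot H^{\alpha/2}$-norm, which controls both sides of \eqref{eq.product}.
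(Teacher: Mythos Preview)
Your proof is correct and follows essentially the same route as the paper's: the Leibniz identity for $(uv)(x)-(uv)(x-y)$, Minkowski's inequality to split the two terms, and then the change of variables $z=x-y$ together with the symmetry \eqref{symmetry} to relocate the weight $|v(x-y)|^2$ onto the diagonal. The only cosmetic difference is that you apply Minkowski in two stages (first in $L^2(\nu(dy))$ pointwise in $x$, then in $L^2(dx)$) whereas the paper applies it once in $L^2(\nu(dy)\,dx)$; by Fubini these give the same bound.
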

\begin{proof} The proof uses Minkowski  inequality and the symmetry condition \eqref{symmetry},
\begin{align*}
\Big(&\int_{\R^N}|D^\nu (uv)(x)|^2\, dx\Big)^{\frac{1}{2}} \\ =&\left(\int_{\R^N}\frac12\int_{\mathbb{R}^N}\left|u(x)v(x)-u(x-y)v(x-y)\right|^2\nu(dy)\, dx\right)^{\frac{1}{2}} \\
\le& \left(\frac12\int_{\R^N}\int_{\mathbb{R}^N}\left|u(x)(v(x)-v(x-y))+v(x-y)(u(x)-u(x-y))\right|^2\nu(dy)\, dx\right)^{\frac{1}{2}} \\
\le& \left(\frac12\int_{\R^N}\int_{\R^N}|u(x)|^2|v(x)-v(x-y)|^2 \, \nu(dy)\, dx\right)^{\frac{1}{2}}\\ &+\left(\frac12\int_{\R^N}\int_{\mathbb{R}^N}|v(x-y)|^2|u(x)-u(x-y)|^2\nu(dy)\, dx\right)^{\frac{1}{2}}\\
=& \left(\int_{\mathbb{R}^N}|u(x)|^2|D^\nu v(x)|^2\,dx\right)^{\frac{1}{2}} + \left(\int_{\mathbb{R}^N}|v(w)|^2|D^\nu u(w)|^2\,dw \right)^{\frac{1}{2}}.
\end{align*}
\end{proof}

In the last result in this preliminary section we control the $L^2$-norm  of functions in $\dot{H}^{\frac\alpha2}(\R^N)$ in a very precise way by interpolation, following ideas of \cite[Lemma~3.2]{Marano-Mosconi}.

\begin{lema}\label{lemaMM-adaptado1}
Let $u\in \dot{H}^{\frac\alpha2}(\R^N)$ and $R>0$. There exists a constant $C>0$ such that
$$
\int_{B_R} |u|^2 \, dx \le C  R^{\alpha}\left[ \int_{B_{4R}-B_{2R}}|D^{\nu}u|^2 \, dx+\left(\int_{B_{4R}-B_{2R}} |u|^{\frac N{N-\alpha}}\,dx\right)^{\frac{N-\alpha}N}\right].
$$
\end{lema}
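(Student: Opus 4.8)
The plan is to localize $u$ with a cut-off function supported in $B_{4R}$ and equal to $1$ on $B_{R}$, estimate the $L^2$ mass of the localized function via the (HLS) / Sobolev inequality \eqref{HLS}, and control the resulting energy by a Poincaré-type argument that only sees the annulus $B_{4R}\setminus B_{2R}$. Concretely, fix $\eta\in C^\infty_c(B_{4R})$ with $\eta\equiv1$ on $B_{2R}$, $0\le\eta\le1$ and $\|\nabla\eta\|_\infty\le C/R$, and set $w=(u-c_R)\eta$, where $c_R$ is a suitable constant (an average of $u$ over the annulus $B_{4R}\setminus B_{2R}$) chosen so that Poincaré–Sobolev on that annulus applies. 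Then $w\equiv u-c_R$ on $B_{2R}$, so
$$
\int_{B_R}|u|^2\,dx\le 2\int_{B_R}|u-c_R|^2\,dx+2|B_R||c_R|^2,
$$
and both terms will be estimated by quantities living on the annulus. The first is bounded by $\|w\|_2^2$, for which I would use \eqref{HLS} together with Hölder on the ball $B_{4R}$ of radius $\sim R$: $\|w\|_2^2\le |B_{4R}|^{\alpha/N}\|w\|_{2^*_\alpha}^2\le CR^\alpha S^{-1}\mathcal{E}(w,w)$.

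The next step is to bound $\mathcal{E}(w,w)=\|D^\nu w\|_2^2$. By Lemma~\ref{lem.product} applied to the product $(u-c_R)\cdot\eta$,
$$
\mathcal{E}(w,w)^{1/2}\le\Big(\int_{\R^N}|u-c_R|^2|D^\nu\eta|^2\,dx\Big)^{1/2}+\Big(\int_{\R^N}|D^\nu(u-c_R)|^2\eta^2\,dx\Big)^{1/2}.
$$
Since $D^\nu(u-c_R)=D^\nu u$ (the $\nu$-gradient kills constants) and $\eta$ is supported in $B_{4R}$ but — crucially — $D^\nu\eta$ and $\eta$ both vanish on $B_{2R}$ where $\eta$ is constant (well, $\eta\equiv1$ there, so $D^\nu\eta$ is not literally zero because of long-range jumps, but it is small), the integrals should reduce, up to controllable error, to integrals over the annulus $B_{4R}\setminus B_{2R}$. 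Here I need the pointwise bound $\|D^\nu\eta\|_\infty\le C(\|\eta\|_\infty^2+\|\nabla\eta\|_\infty^2)^{1/2}\le C/R^{\alpha/2}$ from Lemma~\ref{decay} combined with scaling (Lemma~\ref{scaling}) to get the correct $R$-power; and for the "tail" contribution of $D^\nu\eta$ coming from $|y|$ large — which is the genuinely nonlocal obstruction — I would split the defining integral for $D^\nu\eta(x)$ at $x\in B_R$ into $|y|\lesssim R$ (zero, since $\eta$ is constant on that scale near $B_R$) and $|y|\gtrsim R$, the latter giving $\int_{|y|\gtrsim R}\nu(dy)\lesssim R^{-\alpha}$, hence again the right power. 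The term $\int|u-c_R|^2|D^\nu\eta|^2$ is then $\lesssim R^{-\alpha}\int_{B_{4R}\setminus B_{2R}}|u-c_R|^2\,dx$ plus a tail term involving the $L^2$ mass of $u-c_R$ over the whole support, which must be reabsorbed or handled by the choice of $c_R$.

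Finally, to close the estimate I invoke the fractional Poincaré–Sobolev inequality on the annulus $A=B_{4R}\setminus B_{2R}$: with $c_R=\fint_A u$,
$$
\Big(\int_A|u-c_R|^{\frac{N}{N-\alpha}\cdot 2}\Big)^{\cdots}\ \lesssim\ R^\alpha\int_A|D^\nu u|^2\,dx,
$$
and, alternatively, Hölder bounds $\int_A|u-c_R|^2$ and $|c_R|^2|B_R|$ directly by $R^\alpha\big(\int_A|u|^{N/(N-\alpha)}\big)^{(N-\alpha)/N}$ and by $R^\alpha\int_A|D^\nu u|^2$. Assembling the pieces and tracking the $R$-powers (all of which match because of the scaling in Lemma~\ref{scaling}) yields the asserted bound. \textbf{The main obstacle} I anticipate is exactly the nonlocal tail of $D^\nu\eta$: unlike in the local case, a cut-off no longer makes $D^\nu\eta$ compactly supported, so one must carefully separate the short-range part (which is genuinely supported in the annulus and scales like $R^{-\alpha}$) from the long-range part and show the latter is still controlled by the annular data rather than by the full norm of $u$ — this is where the precise choice of the averaging constant $c_R$ and the dyadic geometry ($B_R$ vs.\ $B_{2R}$ vs.\ $B_{4R}$) do the real work, following \cite[Lemma~3.2]{Marano-Mosconi}.
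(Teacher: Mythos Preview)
There is a genuine gap. Your cut-off scheme chooses $\eta\equiv 1$ on $B_{2R}$, but then you write that ``$D^\nu\eta$ and $\eta$ both vanish on $B_{2R}$'' --- this is false for $\eta$ (it equals $1$ there), and it is precisely the term $\int_{\R^N}|D^\nu u|^2\,\eta^2\,dx$ that causes the trouble: since $\eta^2=1$ on $B_{2R}$, this integral contains $\int_{B_{2R}}|D^\nu u|^2\,dx$ in full, and no choice of $c_R$ or tail splitting can turn that into an integral over the annulus $B_{4R}\setminus B_{2R}$ only. So at best your route yields
\[
\int_{B_R}|u|^2\,dx\le C R^\alpha\Big(\int_{B_{4R}}|D^\nu u|^2\,dx+\text{annular/tail terms}\Big),
\]
which is strictly weaker than the statement. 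The obstacle you flag (the nonlocal tail of $D^\nu\eta$) is real but secondary; the primary failure is that a product-rule/Sobolev argument localizes the energy to the \emph{support} of the cut-off, never to the set where the cut-off varies.

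The paper's argument (following \cite[Lemma~3.2]{Marano-Mosconi}) is conceptually different and uses the nonlocality in the opposite direction: for $z\in B_{4R}\setminus B_{2R}$ the quantity $|D^\nu u(z)|^2$ already contains the long-range contributions $|u(z)-u(x)|^2$ with $x\in B_R$, weighted by a kernel of size $\sim R^{-N-\alpha}$. Starting from the elementary bound $|u(x)|^2\le 2|u(x)-u(z)|^2+2|u(z)|^2$, averaging $z$ over the annulus, and using $|x-z|\sim R$ together with the ellipticity of $\nu$ gives
\[
\int_{B_R}|u|^2\,dx\le C\int_{B_{4R}\setminus B_{2R}}\big(R^\alpha|D^\nu u|^2+|u|^2\big)\,dx,
\]
after which a single application of H\"older to the last term produces the stated inequality. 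No cut-off, no Sobolev inequality, and --- crucially --- the energy term lands on the annulus from the outset.
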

\begin{proof}
Exactly as in that paper we can obtain the estimate
$$
\int_{B_R} |u|^2 \, dx \le C  \int_{B_{4R}-B_{2R}}\left(R^{\alpha}|D^{\nu}u|^2+ |u|^2 \right) \, dx.
$$
Now apply H\"older  inequality to the integral of the second term.
\end{proof}

\section{Concentration-compactness principle}\label{sect:CCP}

In this section we prove a version of the CCP of P.L. Lions. We follow \cite{Lions-limit-case-PartI}, but we also use some ideas of the proof performed in \cite{FB-Saintier-Silva} for the case of the fractional $p$-Laplacian.

Consider  the best constant of the (HLS) inequality,
\begin{equation}\label{S}
\begin{array}{rl}S&\displaystyle= \inf_{\stackrel {u\in \dot{H}^{\frac\alpha2}(\R^N)}{u\neq 0}} \frac{\mathcal{E}(u,u)}{\|u\|_{2_{\alpha}^*}^2}
\\ [3mm]&\displaystyle= \inf\left\{ \mathcal{E}(u,u) \colon u \in \dot{H}^{\frac\alpha2}(\R^N), \, \int_{\R^N}|u|^{2_{\alpha}^*} \, dx = 1 \right\}.
\end{array}\end{equation}
Denote also $\mathcal{M}(\R^N)$ the set of nonnegative finite Radon measures on $\R^N$.

\begin{teo}\label{concentracion-compacidad-RN}
Let $\{ u_k\}_{k\in \N}$ be a sequence in $\dot{H}^{\frac\alpha2}(\R^N)$ such that $u_k\cd u$ in $\dot{H}^{\frac\alpha2}(\R^N)$ as $k\to \infty$.	
Then there exist two measures $\varrho$ and $\eta$, an at most countable set $I$ and  positive numbers $\{\varrho_i\}_{i\in I}, \{\eta_i\}_{i\in I}$ such that
\begin{eqnarray}
\label{medida-gradiente}
|D^{\nu}u_k|^2dx \stackrel{*}{\cd} \eta \ge |D^{\nu}u|^2dx + \sum_{i\in I} \eta_i \delta_{x_i} \text{ in } \mathcal{M}(\R^N),\\
\label{medida-critico}
|u_k|^{2_{\alpha}^*}dx \stackrel{*}{\cd} \varrho=|u|^{2_{\alpha}^*}dx +\sum_{i\in I} \varrho_i \delta_{x_i} \text{ in } \mathcal{M}(\R^N),
\end{eqnarray}
as $k\to \infty$, where for every $i\in I$ it holds
\begin{equation}
\label{eq:relation.coeficients}
S^{\frac{1}{2}}\varrho_i^{\frac{1}{2_{\alpha}^*}} \le \eta_i^{\frac{1}{2}}.
\end{equation}
Moreover,
\begin{equation}\label{masa-infinito}
\lim_{k\to\infty}\int_{\R^N}|D^{\nu}u_k|^2dx = \eta(\R^N) + \eta_{\infty}, \qquad \lim_{k\to\infty} \int_{\R^N} |u_k|^{2_{\alpha}^*} dx = \varrho(\R^N) + \varrho_{\infty},
\end{equation}
with
$$
\eta_{\infty} = \lim_{R\to \infty} \limsup_{k\to \infty} \int_{|x|>R}|D^{\nu}u_k|^2dx, \qquad
\varrho_{\infty} =  \lim_{R\to \infty} \limsup_{k\to \infty} \int_{|x|>R} |u_k|^{2_{\alpha}^*} dx.
$$
\end{teo}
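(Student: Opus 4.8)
The plan is to follow the classical Lions scheme, adapted to the $\nu$-gradient. Since $\{u_k\}$ is bounded in $\dot H^{\frac\alpha2}(\R^N)$ (weakly convergent sequences are bounded), the measures $|D^\nu u_k|^2\,dx$ and $|u_k|^{2^*_\alpha}\,dx$ are bounded in $\M(\R^N)$, so after passing to a subsequence we may assume $|D^\nu u_k|^2\,dx\stackrel{*}{\cd}\eta$ and $|u_k|^{2^*_\alpha}\,dx\stackrel{*}{\cd}\varrho$ for some $\eta,\varrho\in\M(\R^N)$. The heart of the argument is to show that $\varrho$ can only differ from the ``expected'' measure $|u|^{2^*_\alpha}\,dx$ through atoms, and that at each atom the companion atom of $\eta$ dominates via \eqref{eq:relation.coeficients}.

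The first main step is the \emph{reverse H\"older / concentration inequality at the level of measures}. For $\vp\in C^\infty_c(\R^N)$, apply the (HLS) inequality \eqref{ineq} to $\vp u_k$:
\begin{equation*}
S\|\vp u_k\|_{2^*_\alpha}^2\le \mathcal{E}(\vp u_k,\vp u_k).
\end{equation*}
By Lemma~\ref{lem.product},
\begin{equation*}
\mathcal{E}(\vp u_k,\vp u_k)^{\frac12}\le\Big(\int_{\R^N}|\vp|^2|D^\nu u_k|^2\,dx\Big)^{\frac12}+\Big(\int_{\R^N}|D^\nu\vp|^2|u_k|^2\,dx\Big)^{\frac12}.
\end{equation*}
The second term on the right tends to $\int|D^\nu\vp|^2|u|^2\,dx$, because $u_k\to u$ strongly in $L^2_{\mathrm{loc}}$ (compact embedding on the support of $\vp$) and $D^\nu\vp\in L^\infty$ with appropriate decay by Lemma~\ref{decay}; in fact a standard trick shows this term is, up to $o(1)$, controlled by $\big(\int|u|^2|D^\nu\vp|^2\big)^{1/2}$ and hence is negligible in the concentration analysis once we localize $\vp$ near a point. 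Passing to the limit and using $\int|\vp|^{2^*_\alpha}\,d\varrho=\lim\int|\vp|^{2^*_\alpha}|u_k|^{2^*_\alpha}\,dx$, we obtain
\begin{equation*}
S\Big(\int_{\R^N}|\vp|^{2^*_\alpha}\,d\varrho\Big)^{\frac{2}{2^*_\alpha}}\le\Big(\int_{\R^N}|\vp|^2\,d\eta\Big)+ \text{(cross term with $u$)}.
\end{equation*}
By a density argument this extends to all $\vp\in C_c(\R^N)$, and then, testing with $\vp$ supported near a single point and letting the support shrink, one derives the Radon–Nikodym comparison $\varrho\le C\eta$ in a suitable sense; a theorem of Lions (or an elementary measure-theoretic lemma) then forces $\varrho=|u|^{2^*_\alpha}\,dx+\sum_{i\in I}\varrho_i\delta_{x_i}$ with $I$ at most countable, $\varrho_i>0$, and $\eta\ge|D^\nu u|^2\,dx+\sum_{i\in I}\eta_i\delta_{x_i}$ with $S^{\frac12}\varrho_i^{1/2^*_\alpha}\le\eta_i^{1/2}$. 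The lower bound $\eta\ge|D^\nu u|^2\,dx$ away from the atoms follows from weak lower semicontinuity of the $L^2$-norm against nonnegative test functions. I expect this localization step — in particular checking that the cross term involving $D^\nu\vp$ truly vanishes in the limit and does not pollute the atomic inequality — to be the main technical obstacle, since $D^\nu$ is nonlocal and one cannot simply say $D^\nu\vp$ is small on small sets; the remedy is to use Lemma~\ref{lemaMM-adaptado1} or a direct splitting of the $\nu$-integral into near and far parts to absorb it.

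The second step handles mass at infinity. Fix a smooth cutoff $\psi_R$ with $\psi_R\equiv0$ on $B_R$, $\psi_R\equiv1$ outside $B_{2R}$, $|\nabla\psi_R|\le C/R$. Define $\eta_\infty=\lim_{R\to\infty}\limsup_k\int_{|x|>R}|D^\nu u_k|^2\,dx$ and $\varrho_\infty$ analogously (the limits exist by monotonicity in $R$). Splitting $\int|D^\nu u_k|^2\,dx=\int\psi_R^{\,2}|D^\nu u_k|^2+\int(1-\psi_R^{\,2})|D^\nu u_k|^2$ and using the weak-$*$ convergence on the second piece (with $R$ chosen so that $\eta$ and $\varrho$ put no mass on $\partial B_R$), then letting $k\to\infty$ and $R\to\infty$, gives $\lim_k\int|D^\nu u_k|^2\,dx=\eta(\R^N)+\eta_\infty$; the same argument with $2^*_\alpha$ in place of $2$ yields $\lim_k\int|u_k|^{2^*_\alpha}\,dx=\varrho(\R^N)+\varrho_\infty$. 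This proves \eqref{masa-infinito} and completes the proof.
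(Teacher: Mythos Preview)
Your proposal is essentially correct and follows the same route as the paper: apply the (HLS) inequality to $\varphi u_k$, split $\mathcal{E}(\varphi u_k,\varphi u_k)$ via Lemma~\ref{lem.product}, control the cross term $\int|u_k|^2|D^\nu\varphi|^2$, derive the reverse H\"older inequality, invoke Lions' measure-theoretic lemma for the atomic decomposition, localize near each atom to get \eqref{eq:relation.coeficients}, and finally split with a cutoff $\psi_R$ at infinity for \eqref{masa-infinito}.

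There is one organizational difference worth noting. The paper begins (Step~1) by reducing to the case $u\equiv 0$ via the Brezis--Lieb lemma, so that the cross term $\int|u_k|^2|D^\nu\varphi|^2$ genuinely tends to $0$ (by compact embedding on $B_R$ and the $L^{2N/\alpha}$-integrability of $D^\nu\varphi$ outside $B_R$), yielding the clean reverse H\"older inequality \emph{before} citing Lions. You instead keep $u$ around and claim the cross term converges to $\int|u|^2|D^\nu\varphi|^2$, to be discarded only in the atomic localization. That is workable, but it means you cannot directly quote Lions' lemma in its standard form (which needs the clean inequality $S^{1/2}(\int|\varphi|^{2^*_\alpha}d\varrho)^{1/2^*_\alpha}\le(\int|\varphi|^2d\eta)^{1/2}$ without error terms) to get the structure of $\varrho$; you would have to redo that argument with the extra term. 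The paper's reduction to $u=0$ is cleaner and avoids this.

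Two minor remarks: your suggestion to use Lemma~\ref{lemaMM-adaptado1} for the cross-term control is not what the paper does here (that lemma is used later, in the proof of Theorem~\ref{existencia-extremal}); the paper handles the cross term by H\"older with exponents $\tfrac{N}{N-\alpha},\tfrac{N}{\alpha}$ and the scaling invariance $\|D^\nu\varphi_\varepsilon\|_{2N/\alpha}=\|D^\nu\varphi\|_{2N/\alpha}$ from Lemma~\ref{scaling}. Also, in the infinity step the paper does not need to worry about $\eta(\partial B_R)$: it simply observes $1-\Phi_R^2\in C_c$ and applies dominated convergence in $R$.
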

\begin{proof}
\emph{Step 1}. We may assume $u=0$; the general case follows, as in \cite{Lions-limit-case-PartI}, from Brezis-Lieb Lemma \cite[Theorem 1]{Brezis-Lieb}.
	
\noindent\emph{Step 2}. We show that the measures $\varrho$ and $\eta$ verify the reverse H\"older inequality
\begin{equation}\label{reverse-Holder}
S^{\frac{1}{2}} \left(\int_{\R^N} |\vp|^{2_{\alpha}^*} \, d\varrho \right)^{\frac{1}{2_{\alpha}^*}} \le \left( \int_{\R^N} |\vp|^2 \, d\eta \right)^{\frac{1}{2}}\quad\text{for every }\vp \in C_c^{\infty}(\R^N).
\end{equation}	
Let $\vp \in C_c^{\infty}(\R^N)$. Then by the (HLS) inequality we get
$$
S^{\frac{1}{2}} \|\vp u_k\|_{2_{\alpha}^*}\le \mathcal{E}(\vp u_k, \vp u_k)^{\frac{1}{2}} \qquad \text{ for every } k\in\N.
$$
Since $|u_k|^{2_{\alpha}^*}dx \stackrel{*}{\cd} \varrho$  as $k\to\infty$, it follows that
\begin{equation}
\label{la-de-nu}
\limsup_{k \to \infty}\|\vp u_k\|_{2_{\alpha}^*}= \left(\int_{\R^N} |\vp|^{2_{\alpha}^*} \, d\varrho \right)^{\frac{1}{2_{\alpha}^*}}.
\end{equation}
By Lemma~\ref{lem.product}, for every $k\in\N$ it holds
$$
\mathcal{E}(\vp u_k, \vp u_k)^{\frac{1}{2}}\le \left(\int_{\R^{N}} |u_k|^2 |D^{\nu} \vp|^2\, dx\right)^{\frac{1}{2}} +   \left(\int_{\R^{N}} |\vp|^2 |D^{\nu} u_k|^2  \, dx \right)^{\frac{1}{2}}.	
$$	
Now, since $|D^{\nu}u_k|^2dx \stackrel{*}{\cd} \eta$ as $k\to\infty$, we get
\begin{equation}
\label{la-de-mu}
\limsup_{k\to \infty} \int_{\R^{N}} |\vp|^2 |D^{\nu} u_k|^2  \, dx = \int_{\R^{N}} |\vp|^2 \, d\eta.
\end{equation}
Therefore, to finish the proof of the reverse H\"older inequality, it remains to show that
\begin{equation}\label{termino-malo}
\limsup_{k\to \infty}\int_{\R^{N}} |u_k|^2|D^{\nu} \vp|^2\, dx=0.
\end{equation}
Let $R>0$ to be determined. Then,
$$
\int_{\R^{N}} u_k^2 |D^{\nu} \vp|^2\, dx=\underbrace{\int_{B_R}|u_k|^{2} |D^{\nu}\vp|^2\, dx}_{I}+\underbrace{\int_{B^c_R}|u_k|^{2} |D^{\nu}\vp|^2\, dx}_{II}.
$$
By using H\"older  inequality with $q=\frac{2_{\alpha}^*}{2}=\frac N{N-\alpha}$, $q'= \frac{N}{\alpha}$, (HLS) inequality and the boundedness of $\{u_k\}_{k\in \N}$ in $\dot{H}^{\frac\alpha2}(\R^N)$, we get
$$
II\le \|u_k\|_{2_{\alpha}^*}^2 \left( \int_{B^c_R} |D^{\nu} \vp|^{\frac{2N}{\alpha}}\, dx\right)^{\frac{\alpha}{N}}
\le C\left( \int_{B^c_R} |D^{\nu} \vp|^{\frac{2N}{\alpha}}\, dx\right)^{\frac{\alpha}{N}}\to 0,
$$
as $R\to\infty$ since $D^{\nu} \vp\in L^{\frac{2N}{\alpha}}(\R^N)$.
Choose then  $R>2$ large such that $II<\ve$ for every $k\in \N$.
	
Now, since  $H^{\frac\alpha2}(B_R)\hookrightarrow L^2(B_R)$ compactly we have
$$
I\le C\int_{B_R}|u_k|^2\, dx\to 0,
$$
as $k\to \infty$. That finishes the proof of \eqref{termino-malo}.
The reverse H\"older  inequality \eqref{reverse-Holder} follows from \eqref{la-de-nu}, \eqref{la-de-mu} and \eqref{termino-malo}.
	
\noindent\emph{Step 3}. From \eqref{reverse-Holder} it follows exactly as in \cite{Lions-limit-case-PartI}, that there exist a countable set $I$, points $\{ x_i\}_{i\in I} \subset \R^N$ and positive numbers $\{\eta_i\}_{i\in I},\{\mu_i\}_{i\in I}$ such that
\begin{equation}\label{mu-nu-deltas}
\varrho=\sum_{i\in I}\varrho_i \delta_{x_i}, \qquad \eta\ge \sum_{i\in I}\eta_i \delta_{x_i}.
\end{equation}
	
\noindent\emph{Step 4}. Fix $i\in I$, and without loss of generality assume $x_i=0$. Let $\vp \in C_c^{\infty}(\R^N)$ be such that $0\le \vp\le 1, \vp(0)=1$ and $\supp \vp = B_1$, and consider the function $\vp_{\ve}(x)=\vp(x/\ve)$ for $0<\ve<1$.
	
Proceeding in the same way as we did when proving the reverse H\"older inequality, for every $k\in\N$ and $0<\varepsilon<1$, we obtain for the product $\vp_{\ve}u_k$
$$
S^{\frac{1}{2}} \left(\int_{\R^N} |\vp_{\ve}|^{2_{\alpha}^*} \, d\varrho \right)^{\frac{2}{2_{\alpha}^*}} \le  \left(\int_{\R^N}|\vp_{\ve}|^2 \, d \eta \right)^{\frac{1}{2}}+\left(\int_{\R^N}|u_k|^2|D^{\nu} \vp_{\ve}|^2\, dx \right)^{\frac{1}{2}},
$$
which gives
\begin{equation}\label{coeficientes-epsilon}
S^{\frac{1}{2}}\varrho_i^{\frac{1}{2_{\alpha}^*}} \le (\eta\left(B_{\ve}\right))^{\frac{1}{2}}+ \left(\int_{\R^N}|u_k|^2|D^{\nu} \vp_{\ve}|^2\, dx\right)^{\frac{1}{2}}.
\end{equation}
From \eqref{mu-nu-deltas}, it is clear that $\lim\limits_{\ve\to 0}\eta\left( B_{\ve}\right)=\eta_i$.
Let us continue by proving that
\begin{equation}\label{termino-malo-epsilon}
\lim_{\ve \to 0}  \int_{\R^N}|u_k|^2|D^{\nu} \vp_{\ve}|^2 \, dx=0.
\end{equation}
For $k>0$ large to be chosen we split the integral
$$
\int_{\R^N}|u_k|^2|D^{\nu}\vp_{\ve}|^2\, dx =\underbrace{\int_{B_{k\ve}}|u_k|^2|D^{\nu}\vp_{\ve}|^2\, dx}_{A}+\underbrace{\int_{B^c_{k\ve}}|u_k|^2|D^{\nu}\vp_{\ve}|^2\, dx}_{B},
$$
and apply H\"older  inequality to both integrals with the same exponents as before, $q=\frac{N}{N-\alpha}, q'=\frac{N}{\alpha}$ . Using Lemma~\ref{scaling} we get
$$
A
\le c\left(\int_{B_{k\ve}} |u_k|^{\frac{2N}{N-\alpha}}\, dx \right)^{\frac{N-\alpha}{N}}\left(\int_{\R^N}|D^{\nu}\vp_{\ve}|^{\frac{2N}\alpha}\, dx\right)^{\frac\alpha N} \le c\left(\int_{B_{k\ve}} |u_k|^{\frac{2N}{N-\alpha}}\, dx \right)^{\frac{N-\alpha}{N}}.
$$
It follows that $\lim\limits_{\ve\to  0}A=0$ due to the fact that $u_k$ belongs to $L^{\frac{2N}{N-\alpha}}(\R^N)$.

On the other hand,
$$	
B\le c\left(\int_{\R^N} |u_k|^{\frac{2N}{N-\alpha}}\, dx \right)^{\frac{N-\alpha}{N}}\left(\int_{B^c_{k\ve}}|D^{\nu}\vp_{\ve}|^{\frac{2N}\alpha}\, dx\right)^{\frac\alpha N} = c\left(\int_{B^c_k} |D^{\nu}\vp|^{\frac{2N}\alpha}\, dx\right)^{\frac\alpha N}.
$$
Therefore, taking $k$ large we get $B$ small, and then, for $k$ fixed taking $\ve$ small we obtain $A+B$ small, so we conclude~\eqref{termino-malo-epsilon}. This implies the desired relation~\eqref{eq:relation.coeficients}.

\noindent\emph{Step 5}. To see \eqref{masa-infinito} we follow ideas from \cite{FB-Saintier-Silva}. Let us consider a smooth function $\Phi \colon [0, \infty) \to [0,1]$ such that $\Phi=0$ in $[0,1]$ and $\Phi=1$ in $[2,\infty)$. Given $R>0$, define $\Phi_R(x)= \Phi(|x|/R)$.

We can rewrite, for $k\in\N$ and $R>0$,
\begin{equation}\label{bla}
\int_{\R^N} |D^{\nu} u_k|^2\, dx = \int_{\R^N} |D^{\nu} u_k|^2 \Phi_R^2\, dx + \int_{\R^N} |D^{\nu} u_k|^2 (1-\Phi_R^2)\,dx.
\end{equation}
Notice that
$$
\int_{|x|>2R} |D^{\nu} u_k|^2\, dx \le \int_{\R^N} |D^{\nu} u_k|^2 \Phi_R^2\, dx \le \int_{|x|>R} |D^{\nu} u_k|^2\, dx,
$$
from where we deduce
\begin{equation}\label{eta-infty}
\eta_{\infty} = \lim_{R\to \infty} \limsup_{k\to \infty} \int_{\R^N} |D^{\nu} u_k|^2 \Phi_R^2\, dx.
\end{equation}
Analogously, we obtain
\begin{equation*}
\varrho_{\infty} = \lim_{R\to \infty} \limsup_{k\to \infty} \int_{\R^N} | u_k|^{2^*_{\alpha}} \Phi_R^{2^*_{\alpha}}\, dx.
\end{equation*}

Since $1-\Phi_R$ is a smooth function with compact support, by Dominated Convergence Theorem
$$
\lim_{R\to\infty}\lim_{k\to \infty} \int_{\R^N} |D^{\nu} u_k|^2(1- \Phi_R^2)\, dx = \lim_{R\to\infty}\int_{\R^N} (1-\Phi_R^2)\, d\eta=\eta(\R^N).
$$
This together with \eqref{bla} and \eqref{eta-infty} yields the first part of \eqref{masa-infinito}. The second part is completely analogous.
\end{proof}

\section{Existence of an extremal for $S$}\label{sect:extremal}

We recall the Dirichlet form $\mathcal{E}(u,u)$ for $\alpha$-stable operators
\begin{align*}
\mathcal{E}(u,u)= \frac{1}{2}\int_{\R^N}\int_{0}^{\infty} \int_{\mathbb{S}^{N-1}} |u(x)-u(x-r\theta)|^2 \, d\mu(\theta) \frac{dr}{r^{1+\alpha}} \, dx.
\end{align*}

Let $t>0$ and $0\le v\in L^{1}(\R^N)$ be such that $\|v\|_{1}=1$. We define the concentration function of $v$ as
\begin{equation}\label{concentration-function}
Q_v(t)= \sup_{y\in \R^N} \int_{|x-y|<t} v(x)\, dx.
\end{equation}
Notice that $0\le Q_v(t)\le 1$ for every $t>0$.

It will be most useful to recall the following key lemma \cite[Lemma I.1]{Lions-locally-PartI}, which classifies all the possible reasons of lack of compactness when we work with a sequence with fixed norm in $L^1(\R^N)$.

\begin{lema}[\cite{Lions-locally-PartI}] \label{key-lemma} Let $\{ v_k\}_{k\in \N} \subset L^1(\R^N)$ be such that $ v_k\ge0$, $\|v_k\|_{1}= 1$. Then there exists a subsequence $\{ v_{k_j}\}_{j\in \N}$ satisfying one of the three following possibilities:
\begin{itemize}
\item[(C)] {\em Compactness:} there exists a sequence of points $\{ y_j\}_{j\in \N} \subset \R^N$ such that the family $ \{v_{k_j}(\cdot +y_j)\}_{j\in \N}$ is tight, that is,
\begin{equation*}\label{tight}
\forall \, \ve>0, \, \exists \, R>0 \text{ such that } \int_{|x-y_j|<R} v_{k_j}\, dx\ge 1 -\ve.
\end{equation*}
\item[(V)] {\em Vanishing:} $\lim\limits_{j\to \infty}Q_{v_{k_j}}(t)=0$, for every $t>0$.
\item[(D)] {\em Dichotomy:} there exists a constant $\varrho \in (0,1)$ such that, for every $\ve>0$ there exist $j_0\in \N$ and non-negative functions $v_j^1,v_j^2 \in L^1(\R^N)$ such that for every $j\ge j_0$,
\begin{equation*}
\label{dichotomy-1}
\| v_{k_j}-(v_j^1+v_j^2)\|_{1}\le\ve,
\end{equation*}	
\begin{equation*}
\label{dichotomy-2}
\left| \int_{\R^N}v_j^1\, dx-\varrho\right|\le\ve, \quad \left| \int_{\R^N}v_j^2\, dx-(1-\varrho)\right|\le\ve,
\end{equation*}
\begin{equation*}
\label{dichotomy-3}
\lim_{j\to \infty}\dist\left( \supp v_j^1, \supp v_j^2\right)=\infty.
\end{equation*}
\end{itemize}	
\end{lema}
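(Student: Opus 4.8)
The plan is to follow the classical, purely measure-theoretic argument of Lions; the bilinear form $\mathcal{E}$ and the $\alpha$-stable structure play no role here. The whole proof is organized around the concentration function $Q_{v_k}$ of \eqref{concentration-function}: for each fixed $k$ it is nondecreasing in $t$ and takes values in $[0,1]$. First I would apply Helly's selection theorem to the sequence of monotone, uniformly bounded functions $\{Q_{v_k}\}$ to extract a subsequence $\{v_{k_j}\}$ along which $Q_{v_{k_j}}(t)\to Q(t)$ for every $t>0$, with $Q\colon(0,\infty)\to[0,1]$ nondecreasing. Set $\varrho:=\lim_{t\to\infty}Q(t)=\sup_{t>0}Q(t)\in[0,1]$. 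The trichotomy is then exactly the split into the three mutually exclusive cases $\varrho=0$, $\varrho=1$, $0<\varrho<1$. The case $\varrho=0$ is immediate, since then $Q\equiv 0$ and hence $Q_{v_{k_j}}(t)\to 0$ for every $t>0$, which is alternative (V).

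For $\varrho=1$ I would prove (C) as follows. Fix once and for all $R_0$ with $Q(R_0)>\tfrac34$, and for $j$ large choose $y_j\in\R^N$ with $\int_{B_{R_0}(y_j)}v_{k_j}\,dx>\tfrac34$ (for the finitely many remaining indices pick $y_j$ arbitrarily). Given $\varepsilon\in(0,\tfrac14)$, pick $R_\varepsilon\ge R_0$ with $Q(R_\varepsilon)>1-\varepsilon$; then for $j$ large there is $z_j$ with $\int_{B_{R_\varepsilon}(z_j)}v_{k_j}\,dx>1-\varepsilon$. Since two balls each carrying $v_{k_j}$-mass greater than $\tfrac34$ must intersect (total mass is $1$), we get $|y_j-z_j|\le R_0+R_\varepsilon$, hence $B_{R_\varepsilon}(z_j)\subset B_{R_0+2R_\varepsilon}(y_j)$ and $\int_{B_{R_0+2R_\varepsilon}(y_j)}v_{k_j}\,dx>1-\varepsilon$. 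Enlarging the radius to absorb the finitely many exceptional indices gives tightness of $\{v_{k_j}(\cdot+y_j)\}$, i.e.\ (C).

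For $0<\varrho<1$ I would prove (D) with this same $\varrho$. Fix $\varepsilon>0$, choose $R$ with $Q(R)>\varrho-\varepsilon$, and let $y_j$ be a near-maximizer of $w\mapsto\int_{B_R(w)}v_{k_j}\,dx$. Since $Q(t)\le\varrho$ for all $t$, while $Q_{v_{k_j}}(T)\to Q(T)$ as $j\to\infty$ for each fixed $T$, a diagonal argument produces radii $T_j\to\infty$ with $Q_{v_{k_j}}(T_j)<\varrho+\varepsilon$ for $j$ large. Take $v_j^1:=v_{k_j}\,\mathbf 1_{B_R(y_j)}$ and $v_j^2:=v_{k_j}\,\mathbf 1_{\R^N\setminus B_{T_j}(y_j)}$ (smooth cutoffs work just as well). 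Then, up to relabelling $\varepsilon$, one checks $|\int v_j^1-\varrho|\le\varepsilon$, $|\int v_j^2-(1-\varrho)|\le\varepsilon$, $\|v_{k_j}-(v_j^1+v_j^2)\|_1=\int_{B_{T_j}(y_j)\setminus B_R(y_j)}v_{k_j}\,dx\le\varepsilon$, and $\dist(\supp v_j^1,\supp v_j^2)\ge T_j-R\to\infty$, which is precisely (D).

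The only genuinely delicate points are the uniformity issues hidden behind the phrase ``for every $\varepsilon$'': in the compactness case one needs a \emph{single} sequence $\{y_j\}$ working simultaneously for all $\varepsilon$, which is why the centers must be anchored to the fixed radius $R_0$ and the overlap argument is used to relocate larger balls near $y_j$; and in the dichotomy case one must let the outer radius $T_j\to\infty$ while still keeping $Q_{v_{k_j}}(T_j)<\varrho+\varepsilon$, which forces the diagonal selection of $T_j$. Everything else is routine bookkeeping with the monotone limit $Q$, Helly's theorem, and the fact that each $v_{k_j}$ belongs to $L^1(\R^N)$.
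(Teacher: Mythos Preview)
Your sketch is correct and is precisely the classical argument of Lions; the paper itself does not supply a proof of this lemma but simply quotes it from \cite{Lions-locally-PartI}, so there is nothing to compare. The only minor cosmetic point is that in the dichotomy step your annulus estimate gives $2\varepsilon$ rather than $\varepsilon$, which you already account for with ``up to relabelling $\varepsilon$''.
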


We introduce the following notation, for $\gamma>0$,
\begin{equation}\label{S-mu}
S_{\gamma}=\inf\left\{ \mathcal{E}(u,u) \colon  u\in \dot{H}^{\frac\alpha2}(\R^N), \, \int_{\R^N} |u|^{2_{\alpha}^*}\, dx=\gamma   \right\},
\end{equation}
with $S_{1}=S$ defined in \eqref{S}.

\begin{lema}\label{vale-siempre} With the previous notations it is $S_{\gamma}=\gamma^{\frac{N-\alpha}{N}}S$ for every $\gamma \in (0,1)$.
Moreover, $S< S_{\gamma}+S_{1-\gamma}$  for every $\gamma \in (0,1)$.
\end{lema}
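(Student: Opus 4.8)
The plan is to establish the scaling identity first and then the strict subadditivity as a consequence of strict concavity.

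\textbf{Step 1: the scaling identity $S_\gamma=\gamma^{\frac{N-\alpha}{N}}S$.} Given $u$ with $\int|u|^{2^*_\alpha}=\gamma$, I would rescale using Lemma~\ref{scaling}. Setting $u_\tau(x)=u(x/\tau)$, that lemma gives $\|D^\nu u_\tau\|_2^2=\tau^{N-\alpha}\mathcal E(u,u)$ and, since the $L^{2N/\alpha}$-norm of $D^\nu$ is scale invariant while the $L^{2^*_\alpha}$-norm of $u$ itself scales, $\int|u_\tau|^{2^*_\alpha}=\tau^N\int|u|^{2^*_\alpha}=\tau^N\gamma$. Choosing $\tau=\gamma^{-1/N}$ normalizes the constraint to $\int|u_\tau|^{2^*_\alpha}=1$, and then $\mathcal E(u_\tau,u_\tau)=\gamma^{-\frac{N-\alpha}{N}}\mathcal E(u,u)$. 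Taking the infimum over admissible $u$ gives $S=\gamma^{-\frac{N-\alpha}{N}}S_\gamma$, i.e. $S_\gamma=\gamma^{\frac{N-\alpha}{N}}S$. (The argument needs $S>0$, which holds by the HLS inequality~\eqref{HLS}–\eqref{ineq}.) This step is routine.

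\textbf{Step 2: strict subadditivity.} With the formula in hand, the claim $S<S_\gamma+S_{1-\gamma}$ becomes the purely numerical inequality $1<\gamma^{\frac{N-\alpha}{N}}+(1-\gamma)^{\frac{N-\alpha}{N}}$ for $\gamma\in(0,1)$, after dividing by $S>0$. Writing $s=\frac{N-\alpha}{N}$, we have $0<s<1$ since $0<\alpha<2\le N$, so $t\mapsto t^s$ is strictly concave on $[0,1]$ and in particular strictly subadditive: for $\gamma\in(0,1)$, $\gamma^s=\gamma^s\cdot 1^s>\gamma^s\cdot\gamma^{\,?}$ — more directly, since $s<1$ one has $\gamma^s>\gamma$ and $(1-\gamma)^s>1-\gamma$ for $\gamma\in(0,1)$, whence $\gamma^s+(1-\gamma)^s>\gamma+(1-\gamma)=1$. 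Multiplying back by $S$ yields $S_\gamma+S_{1-\gamma}=(\gamma^s+(1-\gamma)^s)S>S$.

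There is no real obstacle here; the only thing to be careful about is invoking $S>0$ (so that the division is legitimate and the strict inequality survives multiplication) and recording that $0<s<1$ uses the standing assumptions $N\ge2$ and $0<\alpha<2$. The homogeneity of the stable operator — reflected in Lemma~\ref{scaling} — is exactly what makes Step~1 clean; for a general non-homogeneous kernel of the form~\eqref{eq:comparable.kernel} this exact scaling would fail, which is consistent with the remark in the introduction.
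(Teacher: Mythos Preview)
Your proposal is correct and follows essentially the same strategy as the paper: exploit homogeneity to derive $S_\gamma=\gamma^{(N-\alpha)/N}S$, then invoke strict subadditivity of $t\mapsto t^{(N-\alpha)/N}$. The only difference is cosmetic: the paper rescales in amplitude, setting $v_\gamma=\gamma^{1/2^*_\alpha}u$ for a normalized $u$, whereas you rescale spatially via $u_\tau(x)=u(x/\tau)$ and Lemma~\ref{scaling}; both give the same identity, and your bijection argument even avoids splitting into two inequalities.
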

\begin{proof}
Let $u\in C_c^{\infty}(\R^N)$ be such that $\int_{\R^N} |u|^{2_{\alpha}^*}\, dx=1$. Then the function $v_{\gamma}(x)= \gamma^{\frac{1}{2_{\alpha}^*}} u(x)$ satisfies $\int_{\R^N} |v_{\gamma}|^{2_{\alpha}^*}\, dx=\gamma$. Thus
$$
S_{\gamma} \le \mathcal{E}(v_{\gamma}, v_{\gamma})=\gamma^{\frac{N-\alpha}{N}}\mathcal{E}(u, u).
$$
By taking the infimum we get $S_{\gamma} \le \gamma^{\frac{N-\alpha}{N}} S$. The reverse inequality is completely analogous. Therefore $S_{\gamma} = \gamma^{\frac{N-\alpha}{N}} S$. This gives
$$
S_{\gamma}+S_{1-\gamma}= \left( \gamma^{\frac{N-\alpha}{N}}+(1-\gamma)^{\frac{N-\alpha}{N}}\right) S>S,
$$
since $\phi(t)=t^{\frac{N-\alpha}{N}}$ is strictly subadditive, that is, $\phi(a+b)<\phi(a)+\phi(b)$, for every $a,b>0$.
\end{proof}

To get rid of the dichotomy case for a minimizing sequence for the constant $S$ we introduce a geometric result. It is important due to the anisotropic character of our $\nu$-gradient.

\begin{lema}\label{lema-geometrico}
	Let $\theta \in \mathbb{S}^{N-1}$. There exists  a constant $C>0$ such that for every $s>2$ it holds
	\begin{equation*}\label{desigualdad-geometrica}
		\left|\{ \xi \in \mathbb{S}^{N-1} \colon |s\xi+r\theta|<1 \}\right| \le \frac{C}{s^{N-1}}.
	\end{equation*}
\end{lema}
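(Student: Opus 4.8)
The plan is to recognize the set in the statement as a subset of a spherical cap around $-\theta$ whose opening angle is of order $1/s$, and then to bound the measure of such a cap by elementary spherical geometry.

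First I would expand $|s\xi+r\theta|^2=s^2+r^2+2rs\,(\xi\cdot\theta)$, so that the set $E:=\{\xi\in\mathbb{S}^{N-1}\colon|s\xi+r\theta|<1\}$ coincides with $\{\xi\in\mathbb{S}^{N-1}\colon\xi\cdot\theta<(1-s^2-r^2)/(2rs)\}$. If $E=\emptyset$ the claimed bound is trivial, so assume $E\neq\emptyset$; choosing $\xi\in E$ and using $\xi\cdot\theta\ge-1$ forces $(s-r)^2<1$, hence $r>s-1>s/2$ (here the hypothesis $s>2$ is used), and in particular $2rs>s^2$. Writing $\xi\cdot(-\theta)=-\xi\cdot\theta$, the inequality defining $E$ becomes
\[
1-\xi\cdot(-\theta)<\frac{1-(s-r)^2}{2rs}\le\frac{1}{2rs}<\frac{1}{s^2},
\]
so that $E\subseteq C_{1/s^2}$, where $C_\delta:=\{\xi\in\mathbb{S}^{N-1}\colon\xi\cdot(-\theta)>1-\delta\}$.

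It then remains to estimate $|C_\delta|$ for $0<\delta<1/4$, a quantity that does not depend on the chosen direction. Parametrizing $\mathbb{S}^{N-1}$ by the polar angle $\psi\in[0,\pi]$ measured from that direction, one has $\xi\cdot(-\theta)=\cos\psi$, so $C_\delta$ corresponds to $\psi<\psi_\delta:=\arccos(1-\delta)$ and
\[
|C_\delta|=|\mathbb{S}^{N-2}|\int_0^{\psi_\delta}\sin^{N-2}\psi\,d\psi\le\frac{|\mathbb{S}^{N-2}|}{N-1}\,\psi_\delta^{\,N-1}.
\]
Since $1-\cos\psi_\delta=\delta$ while $1-\cos\psi\ge c\,\psi^2$ on $[0,\pi]$ for a universal $c>0$, one gets $\psi_\delta\le C\sqrt{\delta}$ and hence $|C_\delta|\le C\,\delta^{(N-1)/2}$ for a constant $C=C(N)$. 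Applying this with $\delta=s^{-2}$ and recalling $E\subseteq C_{s^{-2}}$ yields $|E|\le C\,s^{-(N-1)}$, with $C$ depending only on $N$ and, in particular, uniform in $r$ and $\theta$.

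The single point that needs any care is the cap bound $|C_\delta|\le C\,\delta^{(N-1)/2}$, i.e.\ that a cap of Euclidean height $\delta$ has geodesic radius of order $\sqrt{\delta}$; everything else is bookkeeping. The uniformity of the constant in $r$ and $\theta$ is precisely what will make the lemma effective against the anisotropy of the $\nu$-gradient in the dichotomy part of the existence argument.
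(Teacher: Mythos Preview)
Your proof is correct and follows essentially the same route as the paper: both recognize the set as a spherical cap about $-\theta$ of angular radius $O(1/s)$ and then integrate $(\sin\psi)^{N-2}$ over that cap. The only cosmetic difference is in how the cap radius is bounded---the paper obtains $\cos\varphi_1\le -\sqrt{1-1/s^2}$ directly (via the identity $s^2+r^2-1-2r\sqrt{s^2-1}=(r-\sqrt{s^2-1})^2\ge 0$) and hence $\pi-\varphi_1\le\arcsin(1/s)$, whereas you bound the cap height by $1/s^2$ using $r>s/2$ and then convert to angular radius via $1-\cos\psi\ge c\psi^2$.
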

\begin{proof} By means of a rotation we may assume $\theta= e_1= (1, 0, \ldots, 0)$. Consider also the spherical coordinates
$$
	\begin{aligned}
	\xi_1&= \cos\vp_1,\\
	\xi_2&= \sin\vp_1\cos\vp_2,\\
&\ \vdots	\\
	\xi_{N-1}&= \sin\vp_1\cdots \sin\vp_{N-2}\cos\vp_{N-1},\\
	\xi_N&=\sin\vp_1 \cdots \sin\vp_{N-2}\sin\vp_{N-1},
\end{aligned}
	$$
where $0\le \varphi_j\le\pi$ for $1\le j\le N-2$, and $0\le \varphi_{N-1}\le2\pi$. The corresponding Jacobian is
$$
J=(\sin\vp_1)^{N-2}\ldots (\sin\vp_{N-3})^2\sin\vp_{N-2}\le(\sin\vp_1)^{N-2}.
$$ 	
Rewrite now the condition $1>|s\xi +r \theta|^2= s^2+r^2+2sr\langle \xi, \theta \rangle$ as
	$$
	\cos\varphi_1= \langle \xi, \theta \rangle < \frac{1-(s^2+r^2)}{2sr}\le-\sqrt{1-\frac{1}{s^2}}.
	$$
This implies
$$
\pi-\varphi_1< \sigma_0=\arcsin(1/s)\le C/s.
$$
	Therefore
	$$
	\left\{ \xi \in \mathbb{S}^{N-1} \colon |s\xi+r\theta| <1 \right\} \subset
	\left\{ \xi \in \mathbb{S}^{N-1} \colon \pi-\sigma_0<\varphi_1<\pi \right\}.
	$$
Thus,
$$
		\int_{\mathbb{S}^{N-1}}\chi_{\{\xi \in \mathbb{S}^{N-1} \colon |s\xi +re_1|<1\}}(\xi)\, d\xi\le 2\pi^{N-2}\int_{\pi - \sigma_0}^{\pi} (\sin\vp_1)^{N-2}d\vp_{1}\le  \frac{C}{s^{N-1}}.
$$
\end{proof}

\begin{prop}\label{FELICIDAD}
There exists a constant $C>0$ such that for every $s>2$ it holds
	\begin{equation}\label{hipotesis-mu}
	\int_{\mathbb{S}^{N-1}}\left(\int_{\frac{s}{2}}^{2s}\int_{\mathbb{S}^{N-1}}\chi_{\{ |s\xi+r\theta|<1\}}(\theta)d\mu(\theta)\frac{dr}{r^{1+\alpha}}\right)^{\frac{N}{\alpha}} \, d\xi \le \frac{C}{s^{2N-1}}.
	\end{equation}
\end{prop}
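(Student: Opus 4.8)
\textit{Proof proposal.} The plan is to reduce everything to Lemma~\ref{lema-geometrico} by Tonelli's theorem, after first peeling off the $N/\alpha$ power by an integral inequality in the radial variable. Write
$h(\xi,r)=\int_{\mathbb{S}^{N-1}}\chi_{\{|s\xi+r\theta|<1\}}(\theta)\,d\mu(\theta)=\mu(\{\theta\in\mathbb{S}^{N-1}:|s\xi+r\theta|<1\})$,
so that the left-hand side of \eqref{hipotesis-mu} equals $\int_{\mathbb{S}^{N-1}}\big(\int_{s/2}^{2s}h(\xi,r)\,d\lambda(r)\big)^{N/\alpha}\,d\xi$, where $d\lambda(r)=r^{-1-\alpha}\,dr$ is a finite positive measure on $[s/2,2s]$ with total mass $|\lambda|=\int_{s/2}^{2s}r^{-1-\alpha}\,dr\le C s^{-\alpha}$.

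First I would apply Jensen's inequality (equivalently, H\"older with exponents $N/\alpha$ and $N/(N-\alpha)$) in the variable $r$ against the measure $\lambda$; since $\alpha<2\le N$ the function $t\mapsto t^{N/\alpha}$ is convex, giving $\big(\int h\,d\lambda\big)^{N/\alpha}\le|\lambda|^{N/\alpha-1}\int h^{N/\alpha}\,d\lambda$. Next, since $\mu$ is a finite measure, $0\le h(\xi,r)\le\mu(\mathbb{S}^{N-1})$, hence $h^{N/\alpha}\le C\,h$; this is the only place where a property of $\mu$ is used (the ellipticity condition \eqref{eq.ellipcity} plays no role here). Integrating in $\xi$ and swapping the order of integration by Tonelli reduces the task to estimating $\int_{\mathbb{S}^{N-1}}h(\xi,r)\,d\xi$ uniformly in $r\in[s/2,2s]$.

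For that last quantity, Tonelli once more gives $\int_{\mathbb{S}^{N-1}}h(\xi,r)\,d\xi=\int_{\mathbb{S}^{N-1}}\big|\{\xi\in\mathbb{S}^{N-1}:|s\xi+r\theta|<1\}\big|\,d\mu(\theta)$, and now Lemma~\ref{lema-geometrico} applies for each fixed $\theta$, yielding $\int_{\mathbb{S}^{N-1}}h(\xi,r)\,d\xi\le C\mu(\mathbb{S}^{N-1})\,s^{-(N-1)}$. Here one has to notice that the bound in Lemma~\ref{lema-geometrico} is uniform in $r\in[s/2,2s]$: indeed the constraint $\cos\varphi_1<\frac{1-s^2-r^2}{2sr}\le-\sqrt{1-s^{-2}}$ used in its proof is valid for every $r>0$, because $\frac{s^2+r^2-1}{2sr}$ attains its minimum $\sqrt{1-s^{-2}}$ at $r=\sqrt{s^2-1}$. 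Combining the two estimates, the left-hand side of \eqref{hipotesis-mu} is bounded by $C|\lambda|^{N/\alpha-1}\cdot|\lambda|\cdot s^{-(N-1)}=C|\lambda|^{N/\alpha}s^{-(N-1)}\le C s^{-N}\cdot s^{-(N-1)}=C s^{-(2N-1)}$, as desired.

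I expect the only real subtlety to be the bookkeeping of exponents: one must arrange the Jensen/H\"older step so that the gain $|\lambda|^{N/\alpha}\sim s^{-N}$ from the radial integral multiplies with the gain $s^{-(N-1)}$ from Lemma~\ref{lema-geometrico} to give exactly $s^{-(2N-1)}$, and one must double-check the uniformity in $r$ of Lemma~\ref{lema-geometrico}. Everything else is a routine use of Tonelli's theorem and the finiteness of $\mu$.
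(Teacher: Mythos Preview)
Your proof is correct and follows essentially the same route as the paper: Jensen's inequality to remove the $N/\alpha$ power, Tonelli to exchange the $\xi$ and $\theta$ integrals, and Lemma~\ref{lema-geometrico} for the decisive $s^{-(N-1)}$ gain. The only cosmetic difference is that the paper applies Jensen with respect to the full product measure $d\mu(\theta)\,s^{\alpha}r^{-1-\alpha}\,dr$ and uses $\chi^{N/\alpha}=\chi$ directly, whereas you apply it in $r$ alone and then invoke $h^{N/\alpha}\le Ch$ via the finiteness of $\mu$; both lead to the same arithmetic $s^{-N}\cdot s^{-(N-1)}=s^{-(2N-1)}$.
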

\begin{proof}
	By using Jensen  inequality and Lemma \ref{lema-geometrico}, we calculate
	\begin{align*}
		\int_{\mathbb{S}^{N-1}}&\left(\int_{\frac{s}{2}}^{2s} \int_{\mathbb{S}^{N-1}} \chi_{ \{|s\xi+r\theta|<1 \}}(\theta) \, d\mu(\theta)\frac{dr}{r^{1+\alpha}}\right)^{\frac{N}{\alpha}} \, d\xi\\
		&=\frac{C}{s^{N}}\int_{\mathbb{S}^{N-1}}\left(\int_{\frac{s}{2}}^{2s} \int_{\mathbb{S}^{N-1}} \chi_{|s\xi+r\theta|<1 \}}(\theta) \, d\mu(\theta)\frac{s^{\alpha}dr}{r^{1+\alpha}}\right)^{\frac{N}{\alpha}} \, d\xi\\
		&\le \frac{C}{s^{N}}\int_{\mathbb{S}^{N-1}}\int_{\frac{s}{2}}^{2s} \int_{\mathbb{S}^{N-1}} \left[\chi_{|s\xi+r\theta|<1 \}}(\theta)\right]^{\frac{N}{\alpha}} \, d\mu(\theta)\frac{s^{\alpha}dr}{r^{1+\alpha}} \, d\xi\\
		&= \frac{C}{s^{N-\alpha}}\int_{\mathbb{S}^{N-1}}\int_{\frac{s}{2}}^{2s} \int_{\mathbb{S}^{N-1}} \chi_{|s\xi+r\theta|<1 \}}(\xi) \, d\xi \frac{dr}{r^{1+\alpha}} d\mu(\theta) \\
		&\le \frac{C}{s^{N-\alpha}}\int_{\mathbb{S}^{N-1}}\int_{\frac{s}{2}}^{2s} \frac{1}{s^{N-1}} \frac{dr}{r^{1+\alpha}} d\mu(\theta)=\frac{C}{s^{2N-1}}.
	\end{align*}
\end{proof}

We now prove the main result of this section, the existence of an extremal function for the constant $S$ in the (HLS) inequality, see~\eqref{S}. We follow closely the technique developed in \cite[Theorem I.1]{Lions-limit-case-PartI}.

\begin{teo}\label{existencia-extremal}
The constant $S$ in \eqref{S} is attained, that is, there exists $u_* \in \dot{H}^{\frac\alpha2}(\R^N)$ such that $\mathcal{E}(u_*,u_*)=S\|u_*\|_{2_{\alpha}^*}$.
\end{teo}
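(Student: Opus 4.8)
The plan is to use the standard direct method: take a minimizing sequence for $S$, normalize it so that it has $L^{2^*_\alpha}$-norm equal to $1$, and show that after a suitable translation (and using the homogeneity/scaling of $\mathcal E$) it converges strongly in $L^{2^*_\alpha}(\R^N)$, so that the weak limit is the desired extremal. Concretely, let $\{u_k\}\subset\dot H^{\frac\alpha2}(\R^N)$ satisfy $\|u_k\|_{2^*_\alpha}=1$ and $\mathcal E(u_k,u_k)\to S$. I would apply Lemma~\ref{key-lemma} to the sequence $v_k=|u_k|^{2^*_\alpha}$, which is nonnegative with $\|v_k\|_1=1$, obtaining (up to subsequence) one of the three alternatives \emph{(C)}, \emph{(V)}, \emph{(D)}. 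The goal is to rule out vanishing and dichotomy, so that compactness holds; then, replacing $u_k$ by the translates $u_k(\cdot+y_k)$ and passing to a weak limit $u_*$ in $\dot H^{\frac\alpha2}$, the tightness of $\{v_{k_j}(\cdot+y_{k_j})\}$ together with the concentration-compactness decomposition of Theorem~\ref{concentracion-compacidad-RN} forces $\varrho_\infty=0$ and no atoms, hence $|u_k(\cdot+y_k)|^{2^*_\alpha}\,dx\stackrel{*}{\cd}|u_*|^{2^*_\alpha}\,dx$ with total mass $1$; combined with weak lower semicontinuity of $\mathcal E$ this yields $\mathcal E(u_*,u_*)\le S$ and $\|u_*\|_{2^*_\alpha}=1$, so $u_*$ is an extremal.

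To exclude \emph{vanishing}: if $Q_{v_{k_j}}(t)\to0$ for every $t>0$, then, exactly as in Lions' argument, using the compact embedding $X(B_t)\hookrightarrow L^r(B_t)$ for $r<2^*_\alpha$ on a covering of $\R^N$ by balls of radius $t$ and interpolating via Lemma~\ref{lemaMM-adaptado1}, one gets $\|u_k\|_{2^*_\alpha}\to0$, contradicting the normalization. To exclude \emph{dichotomy}: suppose $v_{k_j}$ splits into pieces of mass $\varrho$ and $1-\varrho$ with supports going infinitely far apart; cut $u_k$ with cutoffs $\phi,\,1-\phi$ localizing near the two pieces. The point is to estimate the cross terms in $\mathcal E(u_k,u_k)=\mathcal E(\phi u_k,\phi u_k)+\mathcal E((1-\phi)u_k,(1-\phi)u_k)+\text{(cross)}$; using Lemma~\ref{lem.product} and the fact that the two cutoffs are supported at distance $\to\infty$, the cross contributions are controlled by precisely the integrals estimated in Proposition~\ref{FELICIDAD}, which decay like $s^{-(2N-1)}$ and hence vanish in the limit. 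This gives $S\ge S_\varrho+S_{1-\varrho}$ in the limit, contradicting the strict subadditivity $S<S_\gamma+S_{1-\gamma}$ of Lemma~\ref{vale-siempre}. Hence only \emph{compactness} survives.

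The \textbf{main obstacle} is the dichotomy step, and specifically the control of the cross terms in the nonlocal, anisotropic energy. Unlike the local case, cutting a function with a partition of unity does not make the energy split cleanly; the nonlocal interaction between a point near $\supp\phi$ and a point near $\supp(1-\phi)$ must be shown to be negligible, and because $\mu$ may be singular and concentrated in few directions, this is not a matter of a crude kernel bound. This is exactly the role of the geometric estimate Lemma~\ref{lema-geometrico} and its consequence Proposition~\ref{FELICIDAD}: they supply the quantitative decay of $\int\big(\int\chi_{\{|s\xi+r\theta|<1\}}\,d\mu\,\tfrac{dr}{r^{1+\alpha}}\big)^{N/\alpha}d\xi$ that is needed to bound, after rescaling, the spurious interaction energy created by the cutoffs as the separation $s\to\infty$. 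Once dichotomy and vanishing are excluded, the remainder of the argument is the routine passage to a weak limit, application of Theorem~\ref{concentracion-compacidad-RN} to see that tightness plus $\|u_k\|_{2^*_\alpha}=1$ leaves no room for concentration at points or escape of mass to infinity, and the use of weak lower semicontinuity to conclude $\mathcal E(u_*,u_*)=S$.
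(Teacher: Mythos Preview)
Your overall strategy matches the paper's, and you correctly identify that Proposition~\ref{FELICIDAD} is the key tool for the dichotomy step. However, there is a genuine gap in your treatment of vanishing, and a related oversight in the compactness step.

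\textbf{The vanishing argument does not work as stated.} You apply Lemma~\ref{key-lemma} to $v_k=|u_k|^{2^*_\alpha}$ and claim that if $Q_{v_k}(t)\to0$ then ``one gets $\|u_k\|_{2^*_\alpha}\to0$''. But $\|u_k\|_{2^*_\alpha}=1$ by normalization, and the Lions vanishing lemma gives $u_k\to0$ only in $L^p$ for $p$ \emph{strictly below} the critical exponent; the critical norm is precisely what it cannot control. Lemma~\ref{lemaMM-adaptado1} does not help here either: it bounds an $L^2$ norm on a ball, not the global $L^{2^*_\alpha}$ norm. The paper avoids this difficulty altogether by exploiting the scaling invariance of the problem: replacing $u_k$ by $u_k^{R_k}(x)=R_k^{(\alpha-N)/2}u_k(x/R_k)$ with $R_k$ chosen so that the concentration function of $\rho_k=|u_k|^{2^*_\alpha}+|D^\nu u_k|^2$ satisfies $Q_k(1)=\tfrac12$. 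This rescaling preserves both $\mathcal E(u_k,u_k)$ and $\|u_k\|_{2^*_\alpha}$, so the sequence is still minimizing, and vanishing is ruled out by construction.

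\textbf{Two further points.} First, the paper applies Lemma~\ref{key-lemma} to $\rho_k=|u_k|^{2^*_\alpha}+|D^\nu u_k|^2$, not to $|u_k|^{2^*_\alpha}$ alone. In the dichotomy step this matters: the smallness of $\int_{B_{R_k}\setminus B_{R_0}}|D^\nu u_k|^2$ in the transition annulus (your approach does not provide it) is used to bound the cutoff error terms $A_{2,1}$ and $A_{2,2}$. Second, in the compactness step, tightness alone does \emph{not} exclude concentration at points; the CCP still allows $\varrho=|u|^{2^*_\alpha}dx+\sum\varrho_i\delta_{x_i}$. The paper first shows $u\not\equiv0$ by arguing that if $u\equiv0$ then all mass concentrates at a single Dirac $\zeta\delta_{x_0}$ with $\zeta\ge1$, which contradicts the normalization $Q_k(1)=\tfrac12$ fixed earlier; only then does it use strict subadditivity to force $\int|u|^{2^*_\alpha}=1$. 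Your sketch omits this mechanism, and without the rescaling step you have no substitute for it.
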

\begin{proof}
	
We prove that every minimizing sequence for $S$ in $ \dot{H}^{\frac\alpha2}(\R^N)$ is relatively compact up to a translation, which implies the existence of the extremal function $u_*$. Let then $\{ u_k\}_{k\in \N}\subset\dot{H}^{\frac\alpha2}(\R^N)$ be a  sequence satisfying $\|u_k\|_{2_{\alpha}^*}=1$, $\|D^{\nu} u_k\|_2^2=\mathcal{E}(u_k,u_k)\to S$ as $k\to\infty$. 	
The key part of the proof is to apply Lemma \ref{key-lemma} to the related sequence
$$
\rho_k= |u_k|^{2_{\alpha}^*}+|D^{\nu} u_k|^2, \qquad \tilde{\rho}_k=\frac{\rho_k}{\|\rho_k\|_{1}},
$$
demonstrating that vanishing (V) and dichotomy (D) cannot occur, see \cite[Theorem~I.2]{Lions-locally-PartII}. Observe that $\|\rho_k\|_{1}\to 1+S$ as $k\to\infty$.

\noindent\emph{Vanishing}. We can rescale $\{ u_k\}_{k\in \N}$, as follows,  $u_k^R(x)= R^{\frac{\alpha-N}{2}}u_k( x/R)$ with $R>0$. The rescaled sequence is still a minimizing sequence for $S$ since
$$
\mathcal{E}(u_k^R,u_k^R)=\mathcal{E}(u_k, u_k), \quad \|u_k^R\|_{2_{\alpha}^*}=\|u_k\|_{2_{\alpha}^*}=1.
$$
Notice that the concentration function associated with each $|u_k^R|^{2_{\alpha}^*}+|D^{\nu}u_k^R|^{2}$ is $$
Q^R_k(t)= Q_k\left(\frac{t}{R} \right)=\sup_{y\in \R^N} \frac{1}{\|\rho_k\|_{1}} \int_{B_{t/R}(y)}\left[ |u_k|^{2_{\alpha}^*} + |D^{\nu}u_k|^2\right] \, dx.
$$
	
Now we avoid vanishing by choosing a sequence $\{ R_k\}_{k\in \N}$ such that
\begin{equation}\label{anti-vanishing}
Q_k^{R_k}(1)= \frac{1}{2}.
\end{equation}
Indeed, $Q_k$ is a non-decreasing continuous function with
$$
Q_k(0)=0 \quad \text{and} \quad \lim_{t\to \infty} Q_k(t)=1,
$$
so vanishing cannot occur.
	
Rename the sequence $\{u_k^{R_k}\}_{k\in \N}$ verifying \eqref{anti-vanishing} as the new $\{u_k\}_{k\in \N}$.
	
\noindent\emph{Dichotomy}. In this case we can write  for every $k\in\N$, looking at the proof of \cite[Lemma I.1]{Lions-locally-PartI}, $u_k= u_k^1+u_k^2+v_k$, with $u_k^1= u_k \xi_k, u_k^2=u_k \eta_k$, where $\xi, \eta \in C^{\infty}(\R^N)$ are two cut-off functions such that
$$
0\le \xi, \eta \le 1, \xi=1 \text{ if } |x|\le1, \xi=0 \text{ if } |x|\ge 2, \quad \eta=1 \text{ if } |x|\ge 1 , \eta=0 \text{ if } |x|\le \frac{1}{2}.
$$
Also, for a sequence $\{z_k\}_{k\in\N}\subset\mathbb{R}^N$ we consider
$$
\xi_k(x)= \xi\left(  \frac{x-z_k}{R_1}\right), \quad \eta_k(x)= \eta\left(\frac{x-z_k}{R_k}\right),
$$
with $R_1\ge R_0$ fixed and $R_k>4R_1$ large. We  have
\begin{eqnarray}
\label{ccp1}
\left|  \bar{\varrho}- \int_{B_{R_0}(z_k)} \left(|u_k|^{2_{\alpha}^*}+ |D^{\nu} u_k|^2 \, \right) dx \right|\le \ve,
\\
\label{ccp2}
\left|  \int_{B_{R_k}(z_k)-B_{R_0}(z_k)} \left(|u_k|^{2_{\alpha}^*}+ |D^{\nu} u_k|^2 \, \right) dx  \right| \le \ve,
\end{eqnarray}
where $\bar{\varrho} \in (0,1+S)$, and the sequence of points $\{z_k\}_{k\in\N}$ is chosen so that
\begin{equation}\label{ccp3}
\varrho -\ve <\int_{B_{R_1}(z_k)} |u_k^1|^{2_{\alpha}^*} \, dx <\varrho +\ve,\qquad  Q_{{k}}(R_k)\le \bar{\varrho}+\ve,
\end{equation}
for any $\rho\in(0,1)$ given.

We recall that $\xi_k=\eta_k=0$ in $B_{R_k/2}(z_k)-B_{2R_1}(z_k)$, so that there we have $v_k= u_k$.
Now we want to estimate
$$
\mathcal{E}(u_k, u_k)-\mathcal{E}(u_k^1, u_k^1)-\mathcal{E}(u_k^2, u_k^2)=A_1 -A_2-A_3-A_4,
$$
where
$$
\begin{array}{ll}
\displaystyle A_1= \int_{\R^{N}}(1-\xi_k^2-\eta_k^2)|D^{\nu} u_k|^2\, dx,\quad & \displaystyle
A_2=\int_{\R^N}|u_k|^2 \left(|D^{\nu} \xi_k|^2 + |D^{\nu} \eta_k|^2\right)\, dx,\\[8pt]
\displaystyle
A_3=\int_{\R^N}T(\xi_k, u_k)(x)\,dx,\quad&
\displaystyle A_4=\int_{\R^N}T(\eta_k, u_k)(x)\,dx,
\end{array}
$$
with
$$
T(w, z)(x)=\int_{\mathbb{R}^{N}}w(x+y)(z(x+y)-z(x))\, z(x)(w(x+y)-w(x)) \, \nu(x,dy).
$$	

We first observe that $A_1\ge0$. On the other hand,  by H\"older  inequality,
\begin{align*}	
	|A_3| &\le  \left(\int_{B_{R_1}}|u_k|^2 |D^{\nu} \xi_k|^2 \, dx\right)^{\frac{1}{2}} \left( \int_{\R^N} |D^{\nu} u_k|^2 \, dx \right)^{\frac{1}{2}}\\
	&\le C\left(\int_{B_{R_1}}|u_k|^2 |D^{\nu} \xi_k|^2 \, dx\right)^{\frac{1}{2}},
\end{align*}
and analogously
$$
	|A_4|\le   C\left(\int_{\R^N}|u_k|^2 |D^{\nu} \eta_k|^2 \, dx\right)^{\frac{1}{2}},
$$
so that $|A_3|+|A_4| \le CA_2^{\frac{1}{2}}$. Hence, it only remains to estimate $A_2$. We have
\begin{align*}
A_2&= \left( \int_{B_{R_0}(z_k)} +\int_{B_{R_k}(z_k)-B_{R_0}(z_k)} +\int_{B_{R_k}^c(z_k)} \right) |u_k|^2 \left(|D^{\nu} \xi_k|^2 + |D^{\nu} \eta_k|^2 \right) \, dx \\
&= A_{2,1}+A_{2,2}+A_{2,3}.
\end{align*}

For $A_{2,1}$, notice that using the properties of Lemmas~\ref{scaling} and~\ref{decay}, we have
$$|D^{\nu} \xi_k|^{2} + |D^{\nu} \eta_k|^2 \le \frac1{R_1^\alpha}|D^{\nu} \xi|^{2} + \frac1{R_k^\alpha}|D^{\nu} \eta|^2 \le C.
$$
Then applying Lemma \ref{lemaMM-adaptado1} we obtain the estimate
\begin{align*}
A_{2,1} \le  &C R_0^{\alpha}\Big[ \int_{B_{4R_0}(z_k)-B_{2R_0}(z_k)}|D^{\nu}u_k|^2 \, dx
\\ &+ \Big(\int_{B_{4R_0}(z_k)-B_{2R_0}(z_k)} |u|^{\frac{2N}{N-\alpha}} \, dx\Big)^{\frac{N-\alpha}N}  \Big]\le C( \ve + \ve^{\frac{N-\alpha}N}  ),
\end{align*}
since $B_{4R_0}(z_k)-B_{2R_0}(z_k)\subset B_{R_k}(z_k)-B_{R_0}(z_k)$.

As to $A_{2,2}$,    using again the cited Lemmas we have
$
\|D^{\nu} \xi_k\|_{\frac{2N}\alpha}+\|D^{\nu} \eta_k\|_{\frac{2N}\alpha} \le C$. Thus, applying H\"older  inequality and \eqref{ccp2} we obtain
$$
\begin{aligned}
A_{2,2} &\le  C \left(\int_{B_{R_k}(z_k)-B_{R_0}(z_k)}|u_k|^{\frac{2N}{N-\alpha}}\,dx\right)^{\frac{N-\alpha}N}
\left(\|D^{\nu} \xi_k\|_{\frac{2N}\alpha}^{2} + \|D^{\nu} \eta_k\|_{\frac{2N}\alpha}^{2} \right)\\
&\le C\ve^{\frac{N-\alpha}N}.
\end{aligned}
$$

For $A_{2,3}$ we apply H\"older  inequality and the fact that $\|u_k\|_{2_{\alpha}^*}=1$ to get
$$
A_{2,3}\le C \left( \int_{B^c_{R_k}(z_k)}  |D^{\nu}\xi_k|^{\frac{2N}{\alpha}} +  |D^{\nu}\eta_k|^{\frac{2N}{\alpha}} \, dx   \right)^{\frac{\alpha}{N}}	.
$$
Now we estimate the last term, where the geometric result, Proposition \ref{FELICIDAD}, enters.
\begin{align*}
	 &\int_{B^c_{R_k}(z_k)} |D^{\nu} \xi_k|^{\frac{2N}{\alpha}}\, dx\\
	 &=\int_{R_k}^{\infty}s^{N-1}\int_{\mathbb{S}^{N-1}}\left[\int_{\mathbb{S}^{N-1}}\int_{\frac{s}{2}}^{2s}\left|\xi\left( \frac{s}{R_1}\phi -\frac{r}{R_1}\theta\right)\right|^2 \, d\mu(\theta)\frac{dr}{r^{\alpha+1}}\right]^{\frac{N}{\alpha}}d\phi ds\\
	&\le C\int_{R_k}^{\infty}s^{N-1}\int_{\mathbb{S}^{N-1}}\left[\int_{\mathbb{S}^{N-1}}\int_{\frac{s}{2}}^{2s}\chi_{\left\{ | \frac{s}{2R_1}\phi +\frac{r}{2R_1}\theta|<1\right\}}(\theta) \, d\mu(\theta)\frac{dr}{r^{\alpha+1}}\right]^{\frac{N}{\alpha}}d\phi ds\\
	&\le \frac{C}{R_1^{N}}\int_{R_k}^{\infty}s^{N-1}\int_{\mathbb{S}^{N-1}}
\left[\int_{\mathbb{S}^{N-1}}\int_{\frac{s}{4R_1}}^{\frac{s}{R_1}}\chi_{\left\{ | \frac{s}{2R_1}\phi +\rho\theta|<1\right\}}(\theta) \, d\mu(\theta)\frac{d\rho}{\rho^{\alpha+1}}\right]^{\frac{N}{\alpha}}d\phi ds\\
	&\le C\int_{R_k}^{\infty}t^{N-1}\int_{\mathbb{S}^{N-1}}\left[\int_{\mathbb{S}^{N-1}}
\int_{\frac{t}{2}}^{2t}\chi_{\left\{ | t\phi +\rho\theta|<1\right\}}(\theta) \, d\mu(\theta)\frac{d\rho}{\rho^{\alpha+1}}\right]^{\frac{N}{\alpha}}d\phi dt\\
	&\le C\int_{R_k}^{\infty}t^{N-1}\frac{1}{t^{2N-1}} dt=\frac{C}{R_k^{N-1}},
\end{align*}
for $k\ge k_0$. Analogously,
$$
\int_{B^c_{R_k}(z_k)} |D^{\nu} \eta_k|^{\frac{2N}{\alpha}}\, dx \le \frac{C}{R_k^{N-1}},
$$
so that  $A_{2,3}\le C\ve.$ In summary $|A_2|+|A_3|+|A_4| \le \delta(\ve)\to0$.
Therefore we conclude
\begin{align*}
S=\lim_{k\to \infty}\mathcal{E}(u_k, u_k) &\ge \liminf_{k\to \infty} \left[\mathcal{E}(u_k^1, u_k^1)+\mathcal{E}(u_k^2,u_k^2)\right]-\delta(\ve)\\
&\ge S\liminf_{k\to \infty}\left[(\|u_k^1\|_{2_{\alpha}^*}^{2_{\alpha}^*})^{\frac{N-\alpha
}{N}}+(\|u_k^2\|_{2_{\alpha}^*}^{2_{\alpha}^*})^{\frac{N-\alpha}{N}}\right]-\delta(\ve)\\
&\ge S \left((\varrho-\ve)^{\frac{N-\alpha}{N}}+(1-\varrho-\ve)^{\frac{N-\alpha}{N}}\right)-\delta(\ve)\\
&\underset{\ve \to 0}{\longrightarrow} S \left(\varrho^{\frac{N-\alpha}{N}}+(1-\varrho)^{\frac{N-\alpha}{N}}\right)>S\\
\end{align*}
for every $0<\rho<1$. We get a contradiction and dichotomy cannot occur.
	
Finally, compactness (C) occurs, that is, there exists a sequence $\{ y_k\}_{k\in \N}$ such that $\tilde{u}_k(x)=u_k(x-y_k)$
		is a minimizing sequence of \eqref{S-mu} verifying
	\begin{align*}
		\forall \ve>0\quad \exists \, R>0\quad \text{ s.t. }  \int_{B_R(y_k)} |\tilde{u}_k|^{2_{\alpha}^*} \, dx\ge 1-\ve.
	\end{align*}
	
	Let us rename again $\{ \tilde{u}_k\}_{k\in\N}$ as $\{u_k\}_{k\in \N}$, and let $u$ be the weak limit of $\{ u_k\}_{k\in \N}$ in $\dot{H}^{\frac\alpha2}(\R^N)$, $L^{2_{\alpha}^*}(\R^N)$. We prove by contradiction
that $u\not \equiv 0$. In fact assuming $u\equiv0$,  and applying Theorem \ref{concentracion-compacidad-RN}, we obtain that, up to taking a subsequence,  there exist measures $\eta,\varrho$ such that
	\begin{equation}\label{eta}
	|D^{\nu}u_k|^2dx \stackrel{*}{\cd} \eta \ge \sum_{i\in I} \eta_i \delta_{x_i} \text{ in } \mathcal{M}(\R^N),
	\end{equation}
	\begin{equation}\label{rho}
	|u_k|^{2_{\alpha}^*}dx \stackrel{*}{\cd} \varrho=\sum_{i\in I} \varrho_i \delta_{x_i} \text{ in } \mathcal{M}(\R^N)\color{blue},
	\end{equation}
as $k\to\infty$, and for every $i\in I$ it holds
	\begin{equation}\label{numeritos}
S^{\frac{1}{2}}\varrho_i^{\frac{1}{2_{\alpha}^*}} \le \eta_i^{\frac{1}{2}}.
	\end{equation}
	where $x_i\in \R^N$, $\varrho_i, \eta_i$ are positive constants, and the set $I$ is at most countable.
	
	Observe that from \eqref{rho}, \eqref{numeritos}, and \eqref{eta}, we get
	$$
	\varrho(\R^N)^{\frac{N-\alpha}{2N}}=\sum_{i\in I} {\varrho_i}^{\frac{N-\alpha}{2N}} \le S^{-\frac{1}{2}}  \sum_{i\in I}\eta_i^{\frac{1}{2}}  \le  S^{-\frac{1}{2}} \eta(\R^N)^{\frac{1}{2}}.
	$$
	Moreover, thanks to \eqref{rho}, \eqref{numeritos}, and \eqref{eta} a reverse H\"older inequality is satisfied, see \eqref{reverse-Holder}. Hence, by \cite[Lemma I.2]{Lions-limit-case-PartI}, the set $I$ reduces to a single point $x_0 \in \R^N$, and $\varrho=\zeta \delta_{x_0}= (\zeta^{\frac{N-\alpha}{N}} S)^{-1} \eta$, for some non-negative constant $\zeta>0$. Moreover, with the relations above we can prove $\zeta\ge 1$.
	
	Now, from \eqref{anti-vanishing}, we know that
	$$
	\frac{1}{2}= Q_{k}(1)\ge \int_{B_1(x_0)} |u_k|^{2_{\alpha}^*} \, dx.
	$$
	Taking the limit, we obtain
	$$
	\frac{1}{2} \ge \varrho(B_1(x_0)) = \zeta \delta_{x_0}(B_1(x_0)) = \zeta \ge 1,
	$$
	which is a contradiction. So that, $u\not \equiv 0$.

Let us show that $u_k \to u$ strongly in $L^{2_{\alpha}^*}(\R^N)$ as $k\to\infty$.  From the previous step we know that $\beta:=\int_{\R^N}|u|^{2_{\alpha}^*} \, dx\in (0,1]$; the goal is to prove $\beta=1$. Assuming $\beta<1$, and applying Theorem \ref{concentracion-compacidad-RN}, we to obtain
\begin{equation}\label{aux}
	\sum_{i\in I} \varrho_i=1-\beta>0, \qquad\int_{\R^N}|D^{\nu} u|^2\, dx= \mathcal{E}(u,u) \le S-\sum_{i\in I}\eta_i.
\end{equation}
Then by  \eqref{numeritos}, again the strict subadditivity of the power $t^{\frac{N-\alpha}N}$, and recalling Lemma~\ref{vale-siempre},  we get
	\begin{align*}
		S_{\beta} &\le \mathcal{E}(u,u)\le S -\sum_{i\in I} \eta_i \le S\left(1 -\sum_{i\in I}\varrho_i^{\frac{N-\alpha}{N}}\right)\\
		&<S\left( 1- \sum_{i\in I} \varrho_i\right)^{\frac{N-\alpha}{N}}=S(1-(1-\beta))^{\frac{N-\alpha}{N}}=S_{\beta},
	\end{align*}
	which is a contradiction. We conclude the convergence in norm, which together with the weak convergence gives  $u_k \to u$ strongly in $L^{2_{\alpha}^*}(\R^N)$ as $k\to\infty$, and $u$ is the desired minimizer.
\end{proof}

\section*{Acknowledgments}
This research has received funding from the European Union's Horizon 2020 research and innovation program under the Marie Skłodowska-Curie grant agreement No 777822 and from the MTM2017-87596-P Project of  Spanish Ministry of Economy. F.\,Quir\'os is also supported by the ICMAT–Severo Ochoa grant CEX2019-000904-S from the Spanish Ministry of Economy; A.\,Ritorto is also supported by the Dutch Research Council NWO through the project TOP2.17.012.

\bibliographystyle{amsplain}
\bibliography{biblio}

\end{document}